\DeclareMathAlphabet{\mathcal}{OMS}{cmsy}{m}{n}
\newtheoremstyle{theorem}
{6pt +1\p@ -2.0\p@}
{6pt +1\p@ -2.0\p@}
{\it}			      
{}				  
{\bfseries}   
{.}               
{.4em}       
{}               
\theoremstyle{theorem}
\newtheorem{theorem}{Theorem}[section]
\newtheorem{definition}[theorem]{Definition}
\newtheorem{proposition}[theorem]{Proposition}
\newtheorem{corollary}[theorem]{Corollary}
\newtheorem{lemma}[theorem]{Lemma}
\newtheorem{remark}[theorem]{Remark}
\newtheorem{example}[theorem]{Example}
\numberwithin{equation}{section}
\newcommand{\R}{\mathbb R}
\newcommand{\Rn}{{\R^n}}
\newcommand{\N}{\mathbb N}
\newcommand{\Z}{\mathbb Z}
\newcommand{\C}{\mathbb C}
\def\S{{\mathcal S}}
\newcommand{\bmo}{\hbox{\rm bmo}}
\newcommand{\lb}{\lbrace}
\newcommand{\rb}{\rbrace}
\newcommand{\Gd}{\delta}
\newcommand{\Gg}{\gamma}
\newcommand{\supp}{\text{supp}\,}
\DeclareMathOperator*{\BMO}{BMO}
\begin{document}
	
	\title{Inhomogeneous cancellation conditions and Calder\'on-Zygmund type operators on $h^p$}
	
	\author{Galia Dafni}
	\address{Department of Mathematics and Statistics, Concordia University, Montreal, QC, H3G 1M8, Canada}
	\email{galia.dafni@concordia.ca}
	
	\author{Chun Ho Lau}
	\address{Department of Mathematics and Statistics, Concordia University, Montreal, QC, H3G 1M8, Canada}
	\email{chunho.lau@concordia.ca}
	
	\author {Tiago Picon}
	\address{Departamento de Computa\c{c}\~ao e Matem\'atica, Universidade S\~ao Paulo, Ribeir\~ao Preto, SP, 14040-901, Brasil}
	\email{picon@ffclrp.usp.br}
	
	\author {Claudio Vasconcelos}
	\address{Departamento de Matem\'atica ,Universidade Federal de S\~ao Carlos, S\~ao Carlos, SP, 13565-905, Brasil}
	\email{claudio.vasconcelos@estudante.ufscar.br}
	
	\thanks{The first and second author were partially supported by the Natural Sciences and Engineering Research Council (NSERC) of Canada and the Centre de recherches math\'{e}matiques (CRM). The third author was supported by Conselho Nacional de Desenvolvimento Cient\'ifico e Tecnol\'ogico (CNPq - grant 311430/2018-0) and Fundaç\~ao de Amparo à Pesquisa do Estado de São Paulo (FAPESP - grant 18/15484-7). The fourth author was supported by Coordenação de Aperfeiçoamento de Pessoal de Nível Superior (CAPES), the Fonds de recherche du Québec Nature and technologies (FRQNT) and MITACS Globalink}
	
	\subjclass[2000]{42B30, {42B20}, 35S05}
	
	\keywords{{Hardy spaces, molecules, moment conditions, Hardy's inequality, inhomogeneous Calder\'on-Zygmund operators, pseudodifferential operators}}
	
	\maketitle
	
	\begin{abstract}
		In this work we present a new approach to molecules on Goldberg's local Hardy spaces $h^p(\Rn)$, $0<p\leq1$, assuming an appropriate cancellation condition.  As applications, we prove a version of Hardy's inequality and improved continuity results for inhomogeneous Calder\'on-Zygmund operators on these spaces.
	\end{abstract}
	
	\section{Introduction}
	
	Atomic decomposition in the  Hardy spaces $H^p(\R^n)$, $0<p\leq 1$, allows us to write any tempered distribution  $f \in H^p(\R^n)$ as an infinite linear combination $\sum_{j}\lambda_{j}a_{j}$  of  atoms $a_{j}$, with $\|f\|_{H^p} \approx \inf \,(\sum |\lambda_{j}|^p)^{1/p}$ over all such decompositions.  An atom is a function supported in a ball (or a cube) which satisfies a size condition relative to that ball, and vanishing moment conditions. Several properties and applications of $H^p(\R^n)$ follow from this important tool; for instance, if $T: \mathcal{S}'(\R^n) \rightarrow \mathcal{S}'(\R^n)$ is a linear and continuous operator then its extension and continuity on $H^p(\R^n)$ can be established by just verifying that $\|Ta_{j}\|_{H^p} \leq C$ uniformly (see  \cite{Bownik2005, MedaSjogrenVallarino2008} for other cases). In the particular case when $T$ is a convolution-type singular integral operator, the uniform control on the atoms can be verified (see \cite[Chapter III p. 325]{GarciaFranciaWeighted}) and in addition, $M_{j}:=Ta_{j}$ are molecules.  In contrast to atoms, molecules do not require compact support, but satisfy a special concentration of Lebesgue norm associated to some ball, as well as vanishing moment conditions.  The molecular theory on Hardy spaces was first studied by Coifman  \cite{Coifman1974} in order to characterize the Fourier transform of distributions on $H^p (\R)$, and by Coifman, Taibleson and Weiss in the subsequent works \cite{Coifman-Weiss1977, Taibleson-Weiss}. This theory has been extensively explored in general settings, in particular to study the continuity of certain classes of non-convolution Calder\'{o}n-Zygmund operators and their generalizations (see \cite{AlvarezMilman}) on Hardy spaces, as well as on more general function spaces such as Triebel-Lizorkin (see \cite{RodolfoTorresBook}). 
	
	The spaces $H^p(\R^n)$ for $0<p\leq 1$ are not closed under multiplication by test functions, since this may destroy the global vanishing moment conditions. Consequently, we do not expect that linear operators such as non-convolution operators (for instance pseudodifferential operators) maps $H^p(\R^n)$ to itself. For this reason, Goldberg  \cite{Goldberg1979} introduced a localizable or inhomogeneous version of Hardy spaces, which he called {\em local Hardy spaces} and denoted by $h^p(\R^n)$, satisfying the continuous inclusions $H^p(\R^n) \hookrightarrow h^{p}(\R^n)$, $h^1(\R^n) \hookrightarrow L^{1}(\R^n)$, the equivalence $h^p(\R^n)=L^{p}(\R^n)$ for $p>1$ with comparable norms,  and the desired property: if $\varphi \in C_{c}^{\infty}(\R^{n})$ and $f \in h^{p}(\R^n)$ then $\varphi f \in h^{p}(\R^n)$. From the comparison between $H^{p}(\R^n)$ and $h^{p}(\R^n)$ (see \cite[Lemma 4]{Goldberg1979}), a natural atomic decomposition for $h^p(\R^n)$ arises which requires vanishing moment conditions only for atoms supported on balls $B$ with radius $r(B)\leq 1$; for atoms with $r(B)>1$, no moment conditions are required.  As an application,  Goldberg shows that pseudodifferential operators in the class $OpS^{0}_{1,0}(\R^n)$ are bounded on $h^{p}(\R^n)$ (see \cite{PiconKappHoepfner} for general symbols in H\"ormander classes).

	In contrast to $H^p (\R^n )$, the molecular theory for $h^p (\R^n )$, $0<p \leq 1$, is still not completely well understood. The initial formulation by Komori \cite{Komori2001}, for $n/(n+1) < p <1$, was used  to obtain boundedness of standard Calder\'on-Zygmund operators from $H^p(\R^n)$ to $h^p(\R^n)$. Recently, this result was extended by Ding, Han and Zhu \cite{DingHanZhu2020} to show the continuity of an inhomogeneous version of such operators, which includes pseudodifferential operators in the H\"ormander class $OpS_{1,0}^{0} (\R^n)$. Komori's molecular approach on $h^{p}(\R^{n})$ for $n/(n+1) < p <1$ assumes the uniform control of the moment given by 
	$$
	\left| \int{M(x)dx} \right| \leq \,C,
	$$ 
	which holds trivially for the case $p=1$ and is therefore not sufficient to characterize $h^1(\R^n)$ (see Example~\ref{example-atom}). Inhomogeneous cancellation conditions for $h^p(\R^n)$ atoms, $0<p\leq 1$, were previously introduced by the first author in \cite[Appendix B]{GaliaThesis}, where Goldberg's vanishing moment conditions on an atom $a$ associated to a ball $B=B(x_{B},r(B))$ with $r(B) < 1$ were relaxed to 
	$$
	\left| \int{a(x) (x-x_B)^{\alpha}dx} \right| \leq \,C \, r(B)^{ \, \eta} \ \text{for all} \ |\alpha| \leq \lfloor n(1/p-1) \rfloor \ \text{and some} \ \eta>0.
	$$
	For the case $p=1$, it was shown by the first author and Yue \cite[Definition 7.3]{GaliaYue} that the power condition can be weakened to a log-type one, i.e., there exists $C>0$ such that
	$$
	\left| \int{a(x)dx} \right| \leq \left[ \log \left( 1+\frac{C}{r(B)} \right) \right]^{-1}.
	$$
	This type of cancellation condition was further used by the first author and Liflyand \cite{GaliaLiflyand}  to give a molecular decomposition and prove Goldberg's version of Hardy's inequality (see \cite{Goldberg2}) for $h^1(\R)$. Approximate moments conditions have also been considered very recently by Ly and Naibo \cite{Naibo} in the Hermite setting.

	The aim of this work is establish a new atomic and molecular characterization of $h^p(\R^n)$, for all $0<p\leq  1$, and apply it to show improved continuity results for inhomogeneous Calder\'on-Zygmund operators  on $h^p(\R^n)$, as well as an $h^p(\R^n)$ version of Hardy's inequality.  The key is to introduce inhomogeneous cancellation conditions on both the operators and the atoms and molecules.  We extend Komori's approach for $p\leq n/n+1$, while reconciling it with  the log-type conditions in the case $p = 1$, by giving different cancellation properties when $p=n/(n+k)$  for $k \in \Z^{+}$, compared to $n/(n+k+1)<p<n/(n+k)$. 	 In fact, we show that a uniform cancellation condition of the type given by Komori is not sufficient for values of $p=n/(n+k)$ with $k \in \Z^+$ (see Example~\ref{example-atom}). The distinction between these two cases corresponds to the different definitions of the dual inhomogeneous Lipschitz spaces $\Lambda_{\gamma_p}(\R^{n})$ depending on whether $\gamma_p = n(1/p - 1)$ is an integer or not (e.g. for $p = 1$ we have $\bmo(\Rn)$ and for $p =\frac{n}{n+1}$, the Zygmund class). 
	
	The organization of the paper is as follows. We start in Section \ref{section-notation} by setting the notation and providing some background on function spaces. In Section \ref{section-approximate-atoms-molecules}, we present the atoms and molecules. 
	In Sections \ref{section-hardy-inequality}  we prove Goldberg's Hardy inequality for $h^{p}(\R^{n})$ for the full range $0<p\leq 1$, and  in \ref{section-applications} we extend the continuity results for inhomogeneous standard Calder\'on-Zygmund operators, studied by Ding et al \cite{DingHanZhu2020}, to a generalized class of operators on $h^p(\R^n)$, for all $0<p\leq 1$. In addition, we introduce the inhomogeneous strongly singular version of these operators and prove a similar continuity result.
	
	\section{Notations and Definitions} \label{section-notation}
	
	Here and in what follows, $\R^n$ denotes $n$-dimensional Euclidean space and $B$ denotes a generic ball in $\R^{n}$ in which $x_B$ and $r(B)$ will always stand for its center and radius, respectively. Given a locally integrable function $f$, we denote the mean of $f$ over $B$ by
	$$
	f_{B}\; := \fint_{B}f(x)dx \; := \frac{1}{|B|}\int_{B}f(x)dx \;,
	$$ 
	where $|B|$ is the Lebesgue measure of $B$. Given $0<p\leq 1$, we set
	$$
	\gamma_p := n\left( \frac{1}{p}-1 \right), \quad  N_p:= \lfloor \gamma_p \rfloor,
	$$
	where $\lfloor \cdot \rfloor$ is the floor function.
	
	\subsection{Lipschitz and generalized Campanato Spaces}  \label{section-lipschitz-campanato}
	
	\begin{definition}
		Let $0<\gamma<1$. A continuous function $f$ belongs to the homogeneous Lipschitz, also called H\"older, space
		$\dot{\Lambda}_{\gamma}(\Rn)$ if there exists $C>0$ such that 
		\[
		|f(x+h)-f(x)|\leq C|h|^{\gamma}
		\] 
		for every $x,h\in \R^{n}$. For $\gamma=1$,  $f$ belongs to $\dot{\Lambda}_1(\R^{n})$ (known as the Zygmund class)
		if there exists $C>0$ such that 
		\[
		|f(x+h)+f(x-h)-2f(x)|\le C|h|.
		\] 
		For $\gamma=k+\theta$, $k \in \N$, {$0< \theta \le 1$}, $f\in \dot{\Lambda}_{\gamma}(\R^{n})$ if all derivatives $\partial^\alpha f\in \dot{\Lambda}_{\theta}(\R^{n})$ for $\alpha\in\N^n$, $|\alpha|= k$. 
	\end{definition}
	\noindent We equip $\dot{\Lambda}_{\gamma}(\R^{n})$, $\gamma = k + \theta$, with the seminorm 
	\[
	|f|_{k+\theta}\,:=  \sum_{|\alpha|= k}
	\sup_{\begin{smallmatrix}x,h \in \R^{n}\\ h \ne 0\end{smallmatrix}} 
	\frac{|\partial^\alpha f(x+h)-\partial^\alpha f(x)|}{|h|^{\theta}} \quad \text{when } 0<\theta<1,
	\]
	or
	\[
	|f|_{k+1}\,:=  \sum_{|\alpha|= k}
	\sup_{\begin{smallmatrix}x,h \in \Rn\\ h \ne 0\end{smallmatrix}} 
	\frac{|\partial^\alpha f(x+h)+\partial^\alpha f(x-h)-2\partial^\alpha f(x)|}{|h|} \quad \text{when } \theta=1,
	\]
	and consider it modulo those functions for which $|f|_{\gamma}=0$, which are the polynomials of degree $\le \lfloor \gamma \rfloor$.
	
	Given $0<p<1$, the dual space of $h^{p}(\R^{n})$ is identified with the inhomogeneous Lipschitz space $\Lambda_{\gamma_p}(\R^{n})\;:=\;\dot{\Lambda}_{\gamma_p}(\R^{n}) \cap L^{\infty}(\R^{n})$, equipped with the norm {$\|f\|_{\Lambda_{\gamma_p}}:= |f|_{\gamma_p}+\|f\|_{L^\infty}$} (see \cite[Theorem 5]{Goldberg1979}). In the same paper, Goldberg also identified the dual of  $h^{1}(\R^{n})$  with the inhomogeneous version of the space BMO of functions of bounded mean oscillation.  This space, denoted $\bmo(\R^{n})$, consists of functions $f\in L^1_{loc}(\R^{n})$ such that
	\begin{equation}\label{bmo2}
		\|f\|_{bmo}:=\sup_{|B|< 1}\fint_{B}|f(x)-f_{B}|dx+ \sup_{|B|\geq 1}\fint_{B}|f(x)|dx<\infty.
	\end{equation}
	Equation \eqref{bmo2} defines a norm on $\bmo(\R^n)$ which is comparable to the one defined by Goldberg, in which balls $B$ are replaced by cubes $Q$ with sides parallel to the coordinate axes.	
	
	Next, we provide the definition of $\psi$-generalized Campanato spaces that will be useful for our applications in Section \ref{section-applications}.
	\begin{definition} \label{generalized-Campanato}
		Let $k\in \N\cup\lb 0\rb$, $1\leq q< \infty$, and $\psi:(0,\infty) \rightarrow (0,\infty)$. We define 
		\begin{equation} \label{space-gen-camp-1}
			L_{k}^{q,\psi}(\R^n):= \bigg\lb f\in L^{q}(\R^n): \ \exists\ C>0 \ s.t.\  \forall \ B\subset \R^n,\  \bigg( \fint_{B}|f(y)-(P^{k}_{B}f)(y)|^{q}dy\bigg)^{\frac{1}{q}}\leq C \, \psi(r(B)) \bigg\rb,
		\end{equation}
		where $P_{B}^{k}f(y)$ is the unique polynomial of degree less than or equal to $k$ that has the same moments as $f$ over $B$ up to order $k$. For the case $q=\infty$, with the same notation as above, we define
		\begin{equation} \label{space-gen-camp-2}
			L_{k}^{\infty,\psi}(\R^n):= \bigg\lb f\in L^{\infty}(\R^n): \ \exists\ C>0 \ s.t.\   \forall \ B,\   \Vert f(y)-(P^{k}_{B}f)\Vert_{L^{\infty}(B)}\leq C\psi(r(B)) \bigg\rb.
		\end{equation}
		The space $L_{k}^{q,\psi}(\R^n)$ is considered as a quotient space of the above classes of functions modulo all polynomials of degree less than or equal to $k$.
	\end{definition}
	Given $f\in L_{k}^{q,\psi}(\R^n)$,  we define
	$$\|f\|_{L_{k}^{q,\psi}} := \sup_{B} \frac{1}{\psi(r(B))}\bigg( \fint_{B}|f(y)-(P^{k}_{B}f)(y)|^{q}dy\bigg)^{\frac{1}{q}}$$
	if $1\leq q<\infty$, and
	$$\|f\|_{L_{k}^{\infty,\psi}} := \sup_{B} \frac{1}{\psi(r(B))}\|f(y)-(P^{k}_{B}f)(y)\|_{L^{\infty}(B)},$$
	where the supremum in both cases is taken over all balls in $\R^n$.
	
	There are several identifications of $\psi$-generalized Campanato spaces with other well known fuctions spaces in Harmonic analysis. In particular, if $k=0$, $1\leq q<\infty$ and $\psi\equiv1$, then $L^{q,\psi}_0(\R^n)\cong \BMO(\R^n)$, and if $\psi(t)=t^{\gamma}$, then $L^{q,\psi}_{\lfloor \gamma \rfloor}(\R^n)\cong \dot{\Lambda}_{\gamma}(\R^{n})$. In addition, following the proof of John-Nirenberg inequality (see \cite{SteinHarmonic} for instance) or the one for Morrey-Campanatos space in \cite{MR2379517}, we can see that if $\psi$ is an increasing function and $k\in \N \cup\lb 0\rb$, then for all $1\leq q<\infty$ we have $L_{k}^{q,\Psi}(\R^n) \cong L_{k}^{1,\Psi}(\R^n)$. We refer to \cite[Chapter III Section 5]{GarciaFranciaWeighted} for a detailed discussion on the relation between Campanato, Lipschitz and Zygmund spaces, and also to \cite{RafeiroCampanato} for an exposition of Campanato spaces on different domains and their generalizations.
	
	In what follows, given $0<p\leq 1$ and $\gamma_p \in \N$, we will consider the space $L_{\gamma_p}^{1,\Psi_p}(\R^n)$, where $\Psi_p(t) := t^{\, \Gg_p} \,	\log \left( 1+\dfrac{1}{\omega t} \right)^{-\frac{1}{p}}$ for some $\omega \geq 0$. 
	
	\section{Approximate atoms and molecules on $h^p$} \label{section-approximate-atoms-molecules}
	
	The spaces $h^p(\Rn)$, $p>0$,  introduced by  Goldberg \cite{Goldberg1979}, can be given by several definitions. We choose to use the maximal function definition.  Fix $\phi \in \S(\Rn)$ such that $\int{\phi(x)dx} \neq 0$, and set $\phi_t(x) = t^{-n}\phi(t^{-1}x)$. We say that a tempered distribution $f$ lies in $h^p(\Rn)$ if
$$\|f\|_{h^p(\Rn)} := \|m_{\phi}\|_{L^p(\Rn)} < \infty, \quad	m_{\phi}f(x) := \sup_{0<t<1} \left| f\ast \phi_t (x) \right|.$$
This class of distributions remains the same no matter which $\phi$ we choose, but  $\|f\|_{h^p}$ may vary.  For $p \geq 1$ this defines a norm, $h^p(\Rn)$ is a Banach space and when $p > 1$ it is equal to $L^p(\Rn)$ with equivalent norms.  We have continuous embeddings $\S(\Rn) \subset h^1(\Rn) \subsetneq L^1(\Rn)$ and so the latter containment is dense.  For $0<p\le1$, the space $h^p(\Rn)$ is a complete metric space with the distance $d(f,g)=\|f-g\|_{h^p}^p$.   Although  $h^p(\Rn)$ is not locally convex for $0<p<1$ and $\|f\|_{h^p}$ is on only quasi-norm \cite{Tri}, we will still refer to the functional $\|f\|_{h^p}$ as the $h^p$ {\em norm} for simplicity. The homogeneous Hardy spaces $H^p(\Rn)$ is contained in the corresponding $h^p(\Rn)$ since the maximal function defining $H^p$ can be taken as $M_\phi$, defined as $m_\phi$ above but allowing $0 < t < \infty$, and this inclusion is proper.

	Analogously to the case of $H^p(\Rn)$, an atomic decomposition can be obtained for $h^p(\Rn)$. Next we present the definition of $L^s$ atoms for $h^p$ (see \cite{Goldberg1979} for the case $s=\infty$), which we will denote as $(p, s)$ atoms for the sake of simplicity, even though this notation is also used in the literature (see for example \cite{GarciaFranciaWeighted}) for $L^s$ atoms for $H^p$.  When the atom is supported in a small ball, there is no difference, but when it is supported in a large ball, the moment conditions from the $H^p$ case are no longer required.
		
	\begin{definition} \label{goldberg-atom}
		Let $0<p\le 1 \le s \le \infty$ with $p<s$. A measurable function $a$ is called a $(p,s)$ atom (for $h^p(\Rn)$) if there exists a ball $B\subset \Rn$ such that
		\begin{equation*}
			\textnormal{(i)} \ supp(a) \subset B; \quad \textnormal{(ii)} \ \| a \|_{L^s} \le r(B)^{n(\frac{1}{s}-\frac{1}{p})}; \quad \textnormal{(iii)} \ \textnormal{if} \ r(B)<1, \int{a(x)x^{\alpha}}dx=0 \ \textnormal{for all} \ |\alpha|\le N_p \ . 
		\end{equation*}
	\end{definition}
	Note that  conditions (i) and (ii) alone give
 	\begin{equation} 
	\label{eq-large_atom} 
	\bigg|  \int a(x)(x-x_{B})^{\alpha}dx  \bigg| \le r(B)^{|\alpha|} \|a\|_{L^{s}} |B|^{1-\frac{1}{s}} \lesssim \ r(B)^{|\alpha|-\gamma_p}. 
  	\end{equation} 
	Here and in what follows, the notation $f \lesssim g$ means that there exists a constant $C>0$ such that $f\le C g$.
  Thus for a $(p,s)$ atom supported in a ball $B$ with $r(B) \ge 1$, the $\alpha$-th moment, $|\alpha| \le N_p$, is bounded by a constant depending only on $p, s$ and $n$.
  
The atomic decomposition (see \cite{Goldberg1979} for the strongest case, $s = \infty$) asserts that for $f \in h^p(\Rn)$, there exists a sequence $\{a_j\}_{j \in \N}$ of $(p,s)$ atoms in $h^p(\Rn)$ and $\{\lambda_j\}_{j \in \N}$ complex scalars in $\ell^p(\C)$ such that 	
	\begin{equation} \label{goldberg-decomp}
		f = \sum_{j \in \N} \lambda_j \, a_j \ \ \mbox{in the sense of distributions, and} \ \ \| f \|_{h^p} \approx \inf  \left( \sum_{j \in \N} |\lambda_j|^p \right)^{\frac{1}{p}},  
	\end{equation}
	where the infimum is taken over all such atomic representations.  The decomposition also converges in the $h^p(\Rn)$ norm.

	\subsection{Approximate atoms} \label{subsectio-approx-atoms}
	
	The spaces $h^p(\R^n)$, $p>0$,  introduced by  Goldberg \cite{Goldberg1979}, can be given by several definitions. We choose to use the maximal function definition.  Fix $\phi \in \S(\R^n)$ such that $\int{\phi(x)dx} \neq 0$, and set $\phi_t(x) = t^{-n}\phi(t^{-1}x)$. We say that a tempered distribution $f$ lies in $h^p(\R^n)$ if
	$$	
	\|f\|_{h^p(\Rn)} := \|m_{\phi}\|_{L^p(\Rn)} < \infty, \quad	m_{\phi}f(x) := \sup_{0<t<1} \left| f\ast \phi_t (x) \right|.
	$$
	This class of distributions remains the same no matter which $\phi$ we choose, but  $\|f\|_{h^p}$ may vary.  For $p \geq 1$ this defines a norm, $h^p(\R^n)$ is a Banach space and when $p > 1$ it is equal to $L^p(\R^n)$ with equivalent norms.  We have continuous embeddings $\S(\Rn) \subset h^1(\R^n) \subsetneq L^1(\R^n)$ and so the latter containment is dense.  For $0<p\le1$, the space $h^p(\R^n)$ is a complete metric space with the distance $d(f,g)=\|f-g\|_{h^p}^p$.   Although  $h^p(\R^n)$ is not locally convex for $0<p<1$ and $\|f\|_{h^p}$ is on only quasi-norm \cite{Tri}, we will still refer to the functional $\|f\|_{h^p}$ as the $h^p$ {\em norm} for simplicity. The homogeneous Hardy spaces $H^p(\R^n)$ is contained in the corresponding $h^p(\R^n)$ since the maximal function defining $H^p$ can be taken as $M_\phi$, defined as $m_\phi$ above but allowing $0 < t < \infty$, and this inclusion is proper.
	
	Analogously to the case of $H^p(\R^n)$, an atomic decomposition can be obtained for $h^p(\R^n)$. Next we present the definition of $L^s$ atoms for $h^p$ (see \cite{Goldberg1979} for the case $s=\infty$), which we will denote as $(p, s)$ atoms for the sake of simplicity, even though this notation is also used in the literature (see for example \cite{GarciaFranciaWeighted}) for $L^s$ atoms for $H^p$.  When the atom is supported in a small ball, there is no difference, but when it is supported in a large ball, the moment conditions from the $H^p$ case are no longer required.
	
	\begin{definition} \label{goldberg-atom}
		Let $0<p\le 1 \le s \le \infty$ with $p<s$. A measurable function $a$ is called a $(p,s)$ atom (for $h^p(\R^n)$) if there exists a ball $B\subset \R^n$ such that
		\begin{equation*}
			\textnormal{(i)} \ supp(a) \subset B; \quad \textnormal{(ii)} \ \| a \|_{L^s} \le r(B)^{n\left(\frac{1}{s}-\frac{1}{p}\right)}; \quad \textnormal{(iii)} \ \textnormal{if} \ r(B)<1, \int{a(x)x^{\alpha}}dx=0 \ \textnormal{for all} \ |\alpha|\le N_p \ . 
		\end{equation*}
	\end{definition}
	Note that  conditions (i) and (ii) alone give
	\begin{equation} 
		\label{eq-large_atom} 
		\bigg|  \int a(x)(x-x_{B})^{\alpha}dx  \bigg| \le r(B)^{|\alpha|} \|a\|_{L^{s}} |B|^{1-\frac{1}{s}} \lesssim \ r(B)^{|\alpha|-\gamma_p}. 
	\end{equation} 
	Here and in what follows, the notation $f \lesssim g$ means that there exists a constant $C>0$ such that $f\le C g$.
	Thus for a $(p,s)$ atom supported in a ball $B$ with $r(B) \ge 1$, the $\alpha$-th moment, $|\alpha| \le N_p$, is bounded by a constant depending only on $p, s$ and $n$.
	
	The atomic decomposition (see \cite{Goldberg1979} for the strongest case, $s = \infty$) asserts that for $f \in h^p(\R^n)$, there exists a sequence $\{a_j\}_{j \in \N}$ of $(p,s)$ atoms in $h^p(\R^n)$ and $\{\lambda_j\}_{j \in \N}$ complex scalars in $\ell^p(\C)$ such that 	
	\begin{equation} \label{goldberg-decomp}
		f = \sum_{j \in \N} \lambda_j \, a_j \ \ \mbox{in the sense of distributions, and} \ \ \| f \|_{h^p} \approx \inf  \left( \sum_{j \in \N} |\lambda_j|^p \right)^{\frac{1}{p}},  
	\end{equation}
	where the infimum is taken over all such atomic representations.  The decomposition also converges in the $h^p(\R^n)$ norm.

	\subsection{Approximate atoms} \label{subsectio-approx-atoms}
	
	In this section we define a notion of  atoms which does not distinguish between small and large values $r(B)$ as in Definition \ref{goldberg-atom}. Instead, our cancellation condition is intrinsically related to the value of $0<p\le1$:  if $p \neq \frac{n}{n+k}$ for every $k \in \Z_{+}$, namely $N_p < \gamma_p$, it suffices to bound the moments up to order $|\alpha|\le N_p$ by a constant; on the other hand, if $p=\frac{n}{n+k}$, a $\log$-type control is needed when $|\alpha|=N_p = \gamma_p$.   	
	\begin{definition} 
		\label{approximate-atom}
		Let $0<p\le 1 \le s \le \infty$ with $p<s$,  $\omega \ge 0$, and define $\varphi_p: (0,\infty) \rightarrow (0,\infty)$ by
		$$		\varphi_p(t):= \left[\log \left( 1+\dfrac{1}{\omega t} \right)\right]^{-\frac{1}{p}},
		$$
		where $\varphi_p(t)=0$ in the limiting case $\omega=0$. We say that a measurable function $a$ is a $(p,s,\omega)$ atom (for $h^p(\Rn)$) if there exists a ball $B\subset \R^n$ such that 
		$$
		\textnormal{(i)} \ supp(a) \subset B; \quad \textnormal{(ii)} \ \| a \|_{L^s} \leq r(B)^{n(\frac{1}{s}-\frac{1}{p})};$$
		and
		$$\textnormal{(iii)'} \ \ \ \ \left\{ \begin{array}{ll} \displaystyle \left| \int_{B}{a(x) (x-x_B)^{\alpha}dx} \right| \le \omega, &\quad \text{if } |\alpha|<\gamma_p, \\ 
			& \\
			\displaystyle \left| \int_{B}{a(x) (x-x_B)^{\alpha}dx} \right| \le \varphi_p(r(B)) &\quad \text{if }|\alpha|=N_p = \gamma_p.
		\end{array} \right.
		$$
	\end{definition}
	This definition covers the one in \cite[Definition 7.3]{GaliaYue} for the case $p=1$, the one in \cite[Lemma 3]{Komori2001} for the case $n/(n+1)<p<1$, and when $\omega = 0$, the $(p,s)$ atoms for the homogeneous Hardy space $H^p$.
	
	Combining \eqref{eq-large_atom} for $r(B) \ge 1$ with the fact that $\varphi_p(t) \le  [\log ( 1+\omega^{-1} )]^{-\frac{1}{p}}$ for $t < 1$, we have that in all cases, the moments of a $(p, s, \omega)$ atom satisfy
	\begin{equation}
		\label{constantbound}
		\left| \int_{B}{a(x) (x-x_B)^{\alpha}dx} \right|  \le C_{p,\omega} \quad \text{for }|\alpha|\le N_p.
	\end{equation}

	Note that  $(p,s)$ atoms supported in small balls are automatically $(p,s, \omega)$ atoms, and if $\omega > 0$, $(p,s)$ atoms  supported in large balls are $(p,s, \omega)$ atoms up to multiplication by a constant depending on $\omega, n, p$ and $s$, thanks to \eqref{eq-large_atom}.	Thus Goldberg's atomic decomposition~\eqref{goldberg-decomp} gives us a decomposition of any $f \in h^p(\R^n)$ into $(p,s,\omega)$ atoms.

	Conversely, to show that every infinite linear combination of $(p, s,\omega)$ atoms, $\sum_{j \in \N} \lambda_j a_j$, taken in the sense of distributions, is in $h^p(\R^n)$ with norm bounded by a constant times $\{\lambda_j\}_{\ell^p}$,  it is enough to show the following bound on the $h^p$ norm of a single $(p, s,\omega)$ atom.  The use of the maximal function norm and the convergence in $\mathcal{S}'(\Rn)$ allows us to pass from that estimate to the norm of the sum.	
	
	\begin{proposition} \label{prop-psomega-atom}
		If $a$ is a $(p,s, \omega)$ atom, then $\| a \|_{h^p} \lesssim 1$, where the constant depends on $p, s$ and $\omega$.
	\end{proposition}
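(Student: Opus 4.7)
My plan is to estimate $\|a\|_{h^p}^p = \int_{\R^n} m_\phi a(x)^p\,dx$ by splitting the integral into a near part over $2B$ and a far part over $(2B)^c$. The near part requires only the size condition (ii): since $m_\phi$ is bounded on $L^s(\R^n)$ for $s>1$ (and of weak type $(1,1)$ for $s=1$), H\"older's inequality (respectively Kolmogorov's inequality when $s=1$) together with $\|a\|_{L^s}\le r(B)^{n(1/s-1/p)}$ yield
$$
\int_{2B} m_\phi a(x)^p\,dx \ \lesssim \ |B|^{1-p/s}\,\|a\|_{L^s}^p \ \lesssim \ r(B)^{n(1-p/s)}\,r(B)^{np(1/s-1/p)} \ = \ 1,
$$
independently of the moment conditions.

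For the far part I split further according to the size of $r(B)$. If $r(B)\ge 1$, the constraint $0<t<1$ forces $t\le r(B)\le |x-x_B|/2$ for $x\notin 2B$, so the Schwartz decay of $\phi$ alone gives $|a \ast \phi_t(x)|\lesssim \|a\|_{L^1}\,|x-x_B|^{-M}$ for any $M>0$, with no use of the moment conditions; combined with $\|a\|_{L^1}\lesssim r(B)^{-\gamma_p}$ and the choice $M\ge n$, integration produces $\int_{(2B)^c}m_\phi a(x)^p\,dx\lesssim r(B)^{np-Mp}\le 1$. If $r(B)<1$, I Taylor expand $\phi_t(x-\cdot)$ around $x_B$ to order $N_p$,
$$
\phi_t(x-y)=\sum_{|\alpha|\le N_p}\frac{(-1)^{|\alpha|}}{\alpha!}\,(\partial^\alpha\phi_t)(x-x_B)\,(y-x_B)^\alpha+R_{t,x}(y),
$$
plug into $a \ast \phi_t(x)$, and bound each polynomial coefficient by means of (iii)': by $\omega$ when $|\alpha|<\gamma_p$ and by $\varphi_p(r(B))$ when $|\alpha|=N_p=\gamma_p$. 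Using the elementary estimate $\sup_{0<t<1}|\partial^\alpha\phi_t(z)|\lesssim |z|^{-n-|\alpha|}$ for $|z|<1$ together with arbitrarily fast polynomial decay for $|z|\ge 1$, and the standard bound $|R_{t,x}(y)|\lesssim r(B)^{N_p+1}\sup_\theta|\nabla^{N_p+1}\phi_t(x-x_B-\theta(y-x_B))|$ on the remainder, everything reduces to integrating powers of $|x-x_B|$ over $(2B)^c$.

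The key subtlety, and the reason for the split in (iii)', arises in the borderline case $p=n/(n+N_p)$ with $N_p\in\Z^+$. When $|\alpha|<\gamma_p$ one has $(n+|\alpha|)p<n$, so the integral $\int_{2r(B)<|x-x_B|<1}|x-x_B|^{-(n+|\alpha|)p}\,dx$ is uniformly bounded in $r(B)$; but when $|\alpha|=N_p=\gamma_p$ the exponent satisfies $(n+|\alpha|)p=n$ and the same integral produces a factor of order $|\log r(B)|$, which must be absorbed precisely by $\varphi_p(r(B))^p=[\log(1+1/(\omega r(B)))]^{-1}$. Verifying this logarithmic balance, and checking that the Taylor remainder contributes a uniformly bounded amount (the factor $r(B)^{(N_p+1-\gamma_p)p}$ compensating the divergence at the lower endpoint $|x-x_B|\sim r(B)$), is where I expect the main bookkeeping difficulty; the case $s=1$ for the near part additionally calls for a Kolmogorov-type argument, but is otherwise routine.
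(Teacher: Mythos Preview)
Your proposal is correct and follows essentially the same route as the paper: a near/far split relative to $2B$, the $L^s$-boundedness (weak-type for $s=1$) of the maximal operator on the near part, and a Taylor expansion of $\phi_t(x-\cdot)$ on the far part so that the moment bounds in (iii)$'$ absorb the resulting powers of $|x-x_B|^{-1}$, with the logarithmic balance in the critical case $|\alpha|=N_p=\gamma_p$ handled exactly as you describe. The only cosmetic difference is that the paper treats all radii uniformly by splitting the far integral into $\{2r<|x-x_B|\le 2\}$ and $\{|x-x_B|\ge 2\}$ (using \eqref{constantbound} for the latter), whereas you first dispose of $r(B)\ge 1$ directly via Schwartz decay and then split $\{|x-x_B|<1\}$ versus $\{|x-x_B|\ge 1\}$; your explicit order-$(N_p+1)$ remainder is in fact a cleaner way to organize the bookkeeping than the paper's Lagrange form at order $N_p$.
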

	
	\begin{proof}
		Let $a$ be a $(p,s, \omega)$ atom supported in $B=B(x_B,r)$. Split
		$$
		\Vert m_{\phi} a \Vert^{p}_{L^p} = \int_{2B} \left(\sup_{0<t<1} |\phi_t\ast a(x)|\right)^p dx + \int_{(2B)^c} \left(\sup_{0<t<1} |\phi_t\ast a(x)|\right)^p dx.	
		$$
		Using the fact that $\displaystyle \sup_{0<t<1}|\phi_t\ast a(x)|\le C_{\phi}\, Ma(x)$, where $M$ denotes the Hardy-Littlewood maximal function, which is bounded from $L^{s}(\R^{n})$ to itself for $1<s\le \infty$, it follows that 
		$$
		\int_{2B} \bigg(\sup_{0<t<1} |\phi_t\ast a(x)|\bigg)^p dx \le C_{\phi,s} |2B|^{1-\frac{p}{s}} \Vert a \Vert_{L^s}^p \le
		C_{\phi,s,p,n} \,\, r^{n\left(1-\frac{p}{s}\right)} r^{n \left(\frac{p}{s}-1\right)} = C_{\phi,s,p,n}.
		$$
		Note the last estimate holds for all $0<p\le1$. For $s=1$ and $p<1$, using that $M$ is of weak type $(1,1)$, the same estimate follows (see \cite[Lemma 3.1 p. 248]{GarciaFranciaWeighted}).
		
		Now we deal with the estimate outside $2B$. From the Taylor expansion of the function $y \mapsto \phi_t(x-y)$ up to order $N_p$, we may write
		\begin{align*}
			\sup_{0<t<1} |\phi_t\ast a(x)| &= \sup_{0<t<1}  \bigg|\int_{\R^n} \sum_{|\alpha| \le N_p-1} C_{\alpha} \, \partial^{\alpha}\phi_t(x-x_B)\; (x_B-y)^{\alpha} a(y)dy \\
			& \quad \quad + \int_{\R^n }\sum_{|\alpha|= N_p} C_{\alpha} \, \partial^{\alpha}\phi_t(x-x_B+c(x_B-y))\; (x_B-y)^{\alpha}  a(y)dy \bigg| \\
		\end{align*}
		for some $c \in \, (0,1)$.  As $|x-x_B|\ge 2r$ and $|y-x_B|\le r$, we have$|x-x_B+c(x_B-y)|\ge |x-x_B|/2$. 
		
		For $\phi \in \mathcal{S}(\R^n)$, we will use the bound
		$
		\vert \partial^{\alpha}\phi(x) \vert \le C_{\alpha} \vert x \vert ^{-N},
		$ 
		where $N>0$, depending on $|\alpha|$, will be chosen conveniently.  Breaking the integral into the integrals over the regions $2r<|x-x_B|\le 2$ (empty if $r \ge 1$) and $|x-x_B|\ge 2$, we take $N=n+|\alpha|$ for the first region and $N=n+N_p+1$ for the second one. Since the supremum in $t$ is taken over $(0,1)$, we have $t^{-n-|\alpha|+n+N_p+1}\le 1$ for all $|\alpha| \le N_p$. Thus
		\begin{align*}
			&\int_{(2B)^c}(\sup_{0<t<1} |\phi_t\ast a(x)|)^p dx  \\
			&\le \int_{2r<|x-x_B|\le 2} \bigg(\sup_{0<t<1} \sum_{|\alpha| \le N_p} C_{\alpha} \, t^{-n-|\alpha|} \bigg|\frac{x-x_B}{t} \bigg|^{-n-|\alpha|}  \bigg|\int_{\R^n}a(y)(y-x_B)^{\alpha} dy\bigg|\bigg)^pdx \\
			&+\int_{|x-x_B|\ge 2}\bigg(\sup_{0<t<1} \sum_{|\alpha| \le N_p} C_{\alpha} \, t^{-n-|\alpha|} \bigg|\frac{x-x_B}{t} \bigg|^{-n-N_p - 1}  \bigg|\int_{\R^n}a(y)(y-x_B)^{\alpha} dy\bigg|\bigg)^pdx\\
			&\lesssim \sum_{|\alpha| \le N_p} \,  \bigg|\int_{\R^n}a(y)(y-x_B)^{\alpha} dy\bigg|^p\left(\int_{2r<|x-x_B|\le 2} |x-x_B|^{-np-|\alpha|p} dx +  \int_{|x-x_B|\ge 2} |x-x_B|^{-p(n+N_p+1)} dx \right).\\
			&\lesssim \sum_{|\alpha| \le N_p} \,  \bigg|\int_{\R^n}a(y)(y-x_B)^{\alpha} dy\bigg|^p\int_{2r<|x-x_B|\le 2} |x-x_B|^{-np-|\alpha|p} dx +  C_{n,p,\omega}.\\
		\end{align*}
		Here we have used \eqref{constantbound} and the fact that $p(n+N_p+1) > n$ to bound the terms involving the integral over $|x-x_B|\ge 2$. 
		
		The other terms are nonzero only when $r < 1$. In the case $p \ne n/(n+k)$ for any $k \in \Z_{+}$, meaning $N_p<\gamma_{p}$, we have $-np-|\alpha|p > -n$ for all $|\alpha| \le N_p$ so the integral over $|x-x_B| \le 2$ is convergent, and together with condition (iii)', this gives a bound which is a constant multiple of $\omega$.
		
		The same bound also works when $p = n/(n+k)$, $k \in \Z_{+}$, but $|\alpha| < N_p$.  When $|\alpha| = N_p = \gamma_p$, we have $-np-|\alpha|p = -n$ and therefore $\int_{2r<|x-x_B|\le 2} |x-x_B|^{-np-|\alpha|p} dx \approx \log r$.  Using condition (iii)' again, this time with the log bound on the moments, gives a multiple of  $\log r \; \varphi_p(r)^p$, which is bounded for $r \le 1$.
		
	\end{proof}
	
	Next we show that in Definition \ref{approximate-atom}, assuming the support and size conditions (i) and (ii), we cannot replace (iii)' by a uniform bound on the moments of $a$ when $p=n/(n+k)$ for $k \in \Z_{+}$.  That is, an estimate of the form \eqref{constantbound} is not sufficient to conclude that $a \in h^{p}(\R^n)$, {and in fact the logarithmic  (in $1/r(B))$) decay on the highest moments in (iii)' is necessary in this case}. The construction of the counterexample  is based on \cite[Lemma 3.1]{PiconKappHoepfner}.
	
	\begin{example} \label{example-atom} \textnormal{
	Let $n=1$ and $p=1/k$ for some $k \in \N$ (hence $N_p=k-1$). Take $\varphi:\R_+ \mapsto \R_+$ to be any bounded function with $\|\varphi\|_\infty \le 1$.  For $0 < r < 2^{1 - k}$, we will construct a function $a$, depending on $r$, and satisfying conditions (i) and (ii) of Definition~\ref{goldberg-atom} for a $(p,\infty)$ atom with respect to the interval $B =  [-2^{k-2}r, \,2^{k-2}r]$.  Moreover, we will show
			 \begin{equation} \label{integral-1}
				\int a(t)t^{\ell}dt=0 \ \ \text{for all } 0\le \ell \le k-2 \quad \mbox{and}\quad \int a(t)t^{k-1}dt=C_k\varphi(r)
			\end{equation}
			for a positive constant $C_k$ independent of $r$.  Finally, we will see that $\varphi(r) |\log r| \lesssim \|a\|_{h^p}$.  As a result, letting $r$ tend to  $0$, we conclude that the norm of $a$ can remain bounded only if $\varphi(r) = {\mathcal O} (1/|\log r|)$.}
			
			\textnormal{We start by defining the odd function
			$$
			a_1(t) = \left\{\begin{array}{ll}
				\left( 2^{k-1}r\right)^{-k}\varphi(r) & \text{if } t \in [0,r]; \\
				-\left( 2^{k-1}r\right)^{-k}\varphi(r)  & \text{if } t \in [-r,0); \\
				0 & \text{if } |t|>r.
			\end{array}\right.
			$$
			Now we construct $a_2$ by translating  the previous function $a_1$ by $r$ units to the right half-line and then extending it to $[-2r,0]$ in such a way that the resulting function is even. We proceed in the same way, translating $a_2$ by $2r$ and constructing an odd function $a_3$, and so on. Inductively, define
			$$
			a_{m+1}(t) = \left\{\begin{array}{l}
				a_{m}(t-2^{m-1}r)-a_m(-t-2^{m-1}r), \quad \text{if } m \text{ is even;} \\
				a_{m}(t-2^{m-1}r)+a_m(-t-2^{m-1}r), \quad \text{if } m \text{ is odd.}
			\end{array}\right.
			$$
			Observe that $a_m$ is an even function if $m$ is even and it is an odd function if $m$ is odd. In addition,
			$$
			\supp(a_m) \subset [-2^{m-1}r, \,2^{m-1}r] \ \ \text{and} \ \ |a_m(t)| \le (2^{k-1}r)^{-k}\|\varphi\|_\infty \le  \left| [-2^{k-2}r, \,2^{k-2}r] \right|^{-k}.
			$$
			When $m = k-1$, this shows that the function $a = a_{k-1}$ satisfies  conditions (i) and (ii)  of Definition~\ref{approximate-atom}. }
			
			\textnormal{We want to show that \eqref{integral-1} holds for $a = a_{k-1}$.
			The first  identity in \eqref{integral-1} follows from \cite[Lemma 3.1]{PiconKappHoepfner}. For the sake of completeness, we are going to show both by proving the following identities for any $m$:
			\begin{equation} \label{integral-2}
				\int a_{m}(t)t^{\ell}dt=0 \ \ \text{for all } 0\le \ell \le m-1 \quad \mbox{and}\quad \int a_{m}(t)t^{m}dt=C_{m,k}r^{m+1-k},
			\end{equation}	
			where $C_{m,k}$ is a positive constant  independent of $r$.
			We proceed  to prove the first identity in \eqref{integral-2}, the vanishing moments, by induction on $m$. We have $\int a_1(t) dt = 0$ by the oddness of $a_1$.  Now assuming the vanishing moments for $a_{m}$, we show it for $a_{m+1}$.  Suppose without loss of generality that $m$ is odd (the same argument works if $m$ is even). By construction, $a_{m+1}$ is even and the vanishing moments will immediately hold for every odd $\ell$, $1\le l \le m$. Suppose $\ell$ is even.  Then using the definition of $a_{m+1}$ and the support of $a_m$ we have
			\begin{align*}
				\int a_{m+1}(t)t^{\ell}dt &= 2\int_{0}^{2^{m}r}a_{m+1}(t)t^{\ell}dt \\
				&=2\int_{0}^{2^{m}r}\left[a_{m}(t-2^{m-1}r)+a_m(-t-2^{m-1}r)\right] t^{\ell}dt \\
				&=2\int_{0}^{2^{m}r}a_{m}(t-2^{m-1}r)t^{\ell}dt  \\
				&=2\int_{-2^{m-1}r}^{2^{m-1}r} a_{m}(t)(t+2^{m-1}r)^{\ell}dt \\
				&= \sum_{\gamma\le \ell-1<m} C_{\ell, \gamma, r, m} \int_{-2^{m-1}r}^{2^{m-1}r} a_{m}(t)t^{\gamma}dt \\
				&= 0.
			\end{align*}
			In the last two steps we have used the fact that $a_{m}(t)t^\ell$ is odd to eliminate the integral of the highest order term in the binomial expansion, followed by the induction hypothesis.}
		
		\textnormal{For the second identity in \eqref{integral-2}, we will follow the same procedure.  First we compute 
			$$\int a_1(t) t dt = \left( 2^{k-1}r\right)^{-k} \varphi(r) 2\int_0^{r} t dt =  \left( 2^{k-1}\right)^{-k} \varphi(r) r^{2 - k}.$$
			Assuming that  $\int a_{m}(t)t^{m}dt=C_{m,k} \varphi(r) r^{m+1-k}$ for some positive constant $C_{m,k}$ independent of $r$, we write, as above,
			\begin{align*}
				\int a_{m+1}(t)t^{m+1}dt &= 2\int_{0}^{2^{m}r}a_{m+1}(t)t^{m+1}dt \\
				&=2\int_{-2^{m-1}r}^{2^{m-1}r} a_{m}(t)(t+2^{m-1}r)^{m+1}dt \\
				&=2\int_{-2^{m-1}r}^{2^{m-1}r} a_{m}(t)(m+1)t^m\; 2^{m-1}r\; dt \\
				&=(m+1)2^m r \int a_{m}(t) t^m dt \\
				&=(m+1)2^m C_{m,k} \varphi(r) r^{m+2-k}.
			\end{align*}
			Here again we have used the fact that $a_{m}(t)t^{n+1}$ is odd to eliminate the integral of the highest order term in the binomial expansion, as well as the vanishing moments of $a_m$ of order $\ell$ for all $\ell < m$, followed by the induction hypothesis.  This proves the induction step with $C_{m+1,k} = (m+1)2^m C_{m,k}$.}

		\textnormal{Now that we have shown that $a = a_{k-1}$ satisfies the conditions of Definition~\ref{approximate-atom} for a $(p,\infty)$ atom with the bound on the highest-order moment in (iii)' replaced by $C_k \varphi(r)$, we want to estimate its $h^p$ norm.  We will do this by testing against an element $f$ of the dual space $\Lambda^{k-1}(\R) = (h^p(\R))^{\ast}$.  
			Fix a cutoff function $\eta \in C^\infty$ with support in $(-1,1)$ which is equal to $1$ on $[-1/2,1/2]$, and let $f$  be given by
			$$
			f(t) = t^{k-1}\log|t|\eta(t).	
			$$
			Recall that we are assuming $r < 2^{1 - k}$, so that $\eta = 1$ on the support of $a$, and we have, by \eqref{integral-1},
			\begin{align*}
				\bigg| \int a(t) f(t)dt - C_k \varphi(r)\log r \bigg| &= \left| \int a(t) t^{k-1} (\log|t| - \log r) dt \right| \\
				&= \left| 2 \int_{0}^{2^{k-2}r} a(t)t^{k-1}\log(t/r)dt \right|  \\
				&= \left| 2r^{k} \int_{0}^{2^{k-2}} a(tr)t^{k-1}\log t dt \right| \\
				&\le  2r^{k} \|a\|_\infty\int_{0}^{2^{k-2}} t^{k-1} |\log(t)|dt   \\
				&\le \widetilde{C}_k,
			\end{align*}
			where $\widetilde{C}_k$ is independent of $r$.
			This shows 
			$$ \varphi(r)|\log r| \le C_k^{-1}\left(\widetilde{C}_k + \left|\int a(t)f(t)dt \right|\right)  \lesssim 1  + \|f\|_{\Lambda^{k-1}}\|a\|_{h^p} \lesssim 1 + \|a\|_{h^p}$$
			with constants depending on $k$ but independent of $r$.}
	\end{example}
	
	\subsection{Approximate Molecules} \label{subsection-approx-molecules}
	
	In this section, we present a new class of molecules analogous to the $(p,q, \omega)$ atoms defined above, and a molecular decomposition on $h^{p}(\R^{n})$ for the full range $0<p\leq 1$.  Note that the notation $(p,q,b)$-molecule was used in \cite{GarciaFranciaWeighted} for $H^p$ molecules with full cancellation conditions.
	
	\begin{definition} \label{hpmolecule}
		Let $0<p\leq 1 \leq s< \infty$ with $p<s$, $\lambda > n \left(\frac{s}{p}-1\right)$, and $\omega$, $\varphi_p$ be as in Definition~\ref{approximate-atom}. We say that a measurable function $M$ is a 
		$(p,s,\lambda, \omega)$ molecule (for $h^p(\Rn)$) if there exists a ball $B \subset \Rn$ and a constant $C>0$ such that
		\begin{itemize}	      \item[\textnormal{M1.}] $\|M\|_{L^s(B)} \leq C \, r(B)^{n\left(\frac{1}{s}-\frac{1}{p}\right)}$
			\item[\textnormal{M2.}] $\displaystyle \left( \int_{B^c}{|M(x)|^s \, |x-x_B|^{\lambda}dx}\right)^{1/s} \leq C \, r(B)^{\frac{\lambda}s + n \left(\frac{1}{s}-\frac{1}{p}\right)}$
			\item[\textnormal{M3.}] $\displaystyle \left\{ \begin{array}{ll} \displaystyle \left| \int_{\Rn}{M(x) (x-x_B)^{\alpha}dx} \right| \leq \omega, &\quad \text{if } |\alpha|<\gamma_p, \\ 
				& \\
				\displaystyle \left| \int_{\Rn}{M(x) (x-x_B)^{\alpha}dx} \right| \leq \varphi_p(r(B)) &\quad \text{if } |\alpha|=N_p =\gamma_p.
			\end{array} \right.$
		\end{itemize}
		We call the molecule ``normalized" if $C = 1$.
	\end{definition}
	Choosing $s=1$, the previous definition covers the molecules introduced by Komori in \cite{Komori2001}[Definition 4.4] for $n/(n+1) < p < 1$. In particular, our definition not only extends it to values of $p \leq \frac{n}{n+1}$ but also provides an appropriate bound for the size of the moment condition when $p=n/(n+k)$ for $k \in \Z^{+}$.

	\begin{remark} \label{remark-molecules} \textcolor{white}{.}
	\begin{itemize}
		\item[\textnormal{(i)}] \textnormal{Conditions (M1) and (M2) together are equivalent to the pair of global estimates (with a different choice of the constant $C$)
		\begin{equation*}
			\|M\|_{L^s(\Rn)}  \leq C \,  r(B)^{n\left(\frac{1}{s}-\frac{1}{p}\right)}\,\,\,\text{ and } \,\,\, \|M|\cdot-x_B|^{\frac{\lambda}{s}}\|_{L^s(\Rn)}  \leq C \, r(B)^{\frac{\lambda}s + n \left(\frac{1}{s}-\frac{1}{p}\right)}.
		\end{equation*}
		Moreover, if (M2) holds for a given $\lambda$, then the analogous estimate holds for any $\lambda' < \lambda$:
		\begin{equation*}
			\int_{B^c}{|M(x)|^s \, |x-x_B|^{\lambda'}dx} \leq r(B)^{\lambda' - \lambda} \int_{B^c}{|M(x)|^s \, |x-x_B|^{\lambda}dx} \le  r(B)^{\lambda' + n \left(1-\frac{s}{p}\right)}.
		\end{equation*}
		so combined with (M1) we also have the corresponding bound on $ \|M|\, \cdot-x_B|^{\frac{\lambda'}{s}}\|_{L^s(\Rn)}$. }
		\item[\textnormal{(ii)}] \textnormal{As in \eqref{eq-large_atom}, assuming conditions (M1) and (M2) only, we can derive the following estimate on the moments of $M$. Let $A_j := \{ x \in \Rn: \ 2^{j}r\le |x-x_B|<2^{j+1}r  \}$ with $r=r(B)$, $j \geq 0$.  Then for $|\alpha| \leq \gamma_p \leq  n \left(\frac{1}{p}-\frac{1}{s}\right) <\frac{\lambda}s$ we have
		\begin{align*}
			\left| \int M(x) (x-x_B)^{\alpha}dx \right| 
			&\lesssim r^{|\alpha|}|B|^{1-\frac{1}{s}}\|M\|_{L^s(B)} +\sum_{j=0}^{\infty}(2^jr)^{|\alpha|-\frac{\lambda}{s}}|A_j|^{1-\frac{1}{s}} \left( \int_{B^c}{|M(x)|^s \, |x-x_B|^{\lambda}dx} \right)^{\frac{1}{s}}  \\
			&\lesssim \, r^{|\alpha|  - \gamma_p} 
		\end{align*}
		This shows that (M3) holds automatically, with some constant $C_{n,s}$ in place $\omega$, for all balls with $r(B) \geq 1$. 	On the other hand, for small balls, condition (M3) is strongest when $\alpha = 0$.}
		\end{itemize}
		\end{remark}
	
	Next we show that we can decompose a $(p,s,\lambda, \omega)$ molecule into approximate atoms with uniformly bounded norm in $h^p(\Rn)$ .
	
	\begin{proposition} \label{molecular-decomp}
		If $M$ is a normalized $(p,s,\lambda, \omega)$-molecule, then  $\| M \|_{h^p} \lesssim 1$, with the constant depending on the parameters $p,n,s,\lambda, \omega$ but not on $M$.
	\end{proposition}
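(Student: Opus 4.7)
The plan is to decompose $M$ into a countable sum of scalar multiples of $(p,s,\omega)$-atoms whose coefficients are $\ell^p$-summable with norm bounded independently of $M$, and then invoke Proposition~\ref{prop-psomega-atom} together with the $p$-subadditivity $\|f+g\|_{h^p}^p \le \|f\|_{h^p}^p + \|g\|_{h^p}^p$ valid for $0<p\le 1$. Assuming $x_B = 0$, I set $B_0 := B$ and $B_j := 2^jB$ for $j\ge 1$, split $M = \sum_{j\ge 0} M_j$ with $M_0 = M\chi_B$ and $M_j = M\chi_{B_j\setminus B_{j-1}}$, and use (M1), (M2) together with H\"older's inequality on each annulus to obtain
\[
\|M_j\|_{L^s} \lesssim 2^{-j\delta}\, r(B_j)^{n(1/s - 1/p)}, \qquad
|m_{j,\alpha}| \lesssim r(B)^{|\alpha|-\gamma_p}\, 2^{-j(\delta + \gamma_p - |\alpha|)}
\]
for $|\alpha| \le N_p$, with $m_{j,\alpha} := \int M_j(x)x^\alpha\,dx$ and $\delta := \lambda/s + n/s - n/p > 0$, the positivity being exactly the hypothesis $\lambda > n(s/p - 1)$.

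Then I fix, once and for all, a dual basis $\{\theta_\alpha\}_{|\alpha|\le N_p} \subset C_c^\infty(B(0,1))$ satisfying $\int \theta_\alpha(y)\,y^\beta\,dy = \delta_{\alpha\beta}$ (obtained by Gram--Schmidt from a smooth bump), and set $\pi_{\alpha,j}(x) := r(B_j)^{-n-|\alpha|}\, \theta_\alpha(x/r(B_j))$; each $\pi_{\alpha,j}$ is supported in $B_j$, obeys $\|\pi_{\alpha,j}\|_{L^s} \lesssim r(B_j)^{n/s-n-|\alpha|}$, and satisfies $\int \pi_{\alpha,j}(x) x^\beta\,dx = \delta_{\alpha\beta}$. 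Writing $\sigma_{l,\alpha} := \sum_{j\ge l} m_{j,\alpha}$, so that $\sigma_{0,\alpha} = \int M(x) x^\alpha\,dx$, and $a_j := M_j - \sum_{|\alpha|\le N_p} m_{j,\alpha}\pi_{\alpha,j}$, an Abel summation on the correction series produces
\[
M \;=\; \sum_{j\ge 0} a_j \;+\; \sum_{|\alpha|\le N_p}\sigma_{0,\alpha}\,\pi_{\alpha,0}\;+\; \sum_{|\alpha|\le N_p}\sum_{l\ge 1}\sigma_{l,\alpha}\,(\pi_{\alpha,l} - \pi_{\alpha,l-1}),
\]
with all series converging in $L^s$, hence in $\mathcal S'(\R^n)$.

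The next step is to verify that each summand is a bounded multiple of a $(p,s,\omega)$-atom with $\ell^p$-summable coefficients. The $a_j$ are supported in $B_j$ with vanishing moments up to order $N_p$ by construction, and the estimate $\|a_j\|_{L^s} \lesssim 2^{-j\delta}\,r(B_j)^{n(1/s-1/p)}$ yields an atomic coefficient $\mathcal O(2^{-j\delta})$ (in fact these are $H^p$-atoms, hence automatically $(p,s,\omega)$-atoms). Each $\sigma_{l,\alpha}(\pi_{\alpha,l} - \pi_{\alpha,l-1})$ with $l\ge 1$ is supported in $B_l$ with vanishing moments, and combining the tail bound on $\sigma_{l,\alpha}$ with $\|\pi_{\alpha,l} - \pi_{\alpha,l-1}\|_{L^s} \lesssim r(B_l)^{n/s-n-|\alpha|}$ gives an atomic coefficient $\mathcal O(2^{-l\delta})$. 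For the leading correction $\sigma_{0,\alpha}\pi_{\alpha,0}$ on $B$, whose only nonzero moment equals $\sigma_{0,\alpha}$, a short case analysis (Remark~\ref{remark-molecules}(ii) for $r(B)\ge 1$, and (M3) for $r(B) < 1$) shows this term is a constant multiple of a $(p,s,\omega)$-atom on $B$. Summing the $p$-th powers reduces to geometric series over $\delta > 0$ and finitely many multi-indices, yielding $\|M\|_{h^p}^p \lesssim 1$.

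The main obstacle lies in the leading correction $\sigma_{0,\alpha}\pi_{\alpha,0}$ at the borderline index $|\alpha| = N_p = \gamma_p$ when $r(B) < 1$: one requires precisely the log-type branch $|\sigma_{0,\alpha}| \le \varphi_p(r(B))$ of (M3) to match condition (iii)' of Definition~\ref{approximate-atom}, and the sharpness of this log-type decay is exactly what Example~\ref{example-atom} establishes. Everything else reduces to routine geometric-series manipulations enabled by the positivity of $\delta$ and $\delta + \gamma_p - |\alpha|$ supplied by the tail hypothesis $\lambda > n(s/p-1)$.
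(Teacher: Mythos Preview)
Your proof is correct and follows essentially the same strategy as the paper's: both carry out the classical Taibleson--Weiss molecular decomposition, splitting $M$ into annular pieces, subtracting a moment-correcting ``dual basis,'' and using an Abel summation to reorganize the correction series into atoms with full cancellation plus a single leading correction carrying the approximate moments of $M$. The only cosmetic difference is that the paper uses polynomials $\phi_\alpha^k$ restricted to the annuli $E_k$ (so the cancellation pieces $M_k-P_k$ are supported on annuli), whereas you use rescaled smooth bumps $\pi_{\alpha,j}$ supported on the full balls $B_j$; both choices yield the same $\ell^p$-summable coefficients governed by the same exponent $\delta=\lambda/s-n(1/p-1/s)>0$, and both isolate the same approximate atom at scale $r(B)$ whose moments are exactly those of $M$, so that (M3) feeds directly into condition (iii)' of Definition~\ref{approximate-atom}.
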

	
	\begin{proof}
		The proof is inspired by the classical molecular decomposition for real Hardy spaces. We will outline only the main ideas, highlighting the parts that diverges from the classical proof, which can be consulted in \cite[Theorem III.7.16]{GarciaFranciaWeighted}, preserving some of the notation from that proof below. Let $M$ be a normalized $(p,s,\lambda, \omega)$ molecule associated to a ball $B=B(x_B,r)\subset \R^n$.  We show that
		$$
		M = \sum_{k=0}^\infty{t_{k} \, a_k} + \sum_{k=0}^\infty{s_{k} \, b_k} + a_\omega,
		$$
		where $a_k$, $b_k$ are $(p,s)$ and $(p,\infty)$ atoms with full cancellation, respectively, and $a_\omega$ is a $(p,s,\omega)$ atom. Moreover,
		$$
		\sum_{k=0}^\infty{|t_{k}|^{p}} < \infty \ \text{and} \ \sum_{k=0}^\infty{|s_{k}|^{p}} < \infty 
		$$
		independently of $B$. Let $B_0=E_0=B$, $B_k=B(x_B,2^kr)$, $E_k=B_k \setminus B_{k-1}$ and $M_k(x)=M(x) \chi_{{E_k}}(x)$ for $k \in \N$. 
		Let $P_k=\displaystyle{\sum_{|\alpha| \leq N_p}{m^{k}_{\alpha}\phi_{\alpha}^{k}}}$ to be the polynomial of degree at most $N_p$, restricted to the set $E_k$, for which, for every $|\alpha| \leq N_p$,
		\begin{equation} \label{AF-1}
		 \fint_{E_k}{P_k(x) \  (x-x_B)^{\alpha}dx} = m^{k}_{\alpha}\fint_{E_k}{\phi_{\alpha}^{k}(x) \  (x-x_B)^{\alpha}dx} =	m^{k}_{\alpha}:= \fint_{E_k}{M(x)(x-x_B)^{\alpha}dx}
		 \end{equation}
		and
		 \begin{equation} \label{AF-2}
		  |P_k(x)| \leq C_{n,p}\fint_{E_k}|M(x)|dx
		   \end{equation}
		for some constant independent of $k$.   This is done by choosing the polynomials $\phi_{\alpha}^{k}$ to have $\beta$-th moment equal to $|E_k|$ when $\beta = \alpha$, and zero otherwise, and noting that $(2^kr)^{|\alpha|} |\phi_{\alpha}^k(x)|\leq C$ uniformly in $k$.
		
		Setting, for $|\alpha| \leq N_p$,
		$$
		\nu_\alpha = \sum_{j = 0}^{\infty} |E_j| m^{j}_{\alpha} = \int_{\Rn} M(x)(x-x_B)^{\alpha}dx, \quad
		N_{\alpha}^{k}=\sum_{j = k+1}^{\infty} |E_j| m^{j}_{\alpha} = \int_{E_k^c} M(x)(x-x_B)^{\alpha}dx, \; \; k= 0, 1, 2,\ldots,
		$$ 
		we can represent the sum of $P_k$ as
		\begin{align*}
			\sum_{k=0}^{\infty}{P_k(x)} &= \sum_{|\alpha|\leq N_p} \left(\sum_{k=1}^{\infty} (N_{\alpha}^{k-1} - N_{\alpha}^{k}) |E_k|^{-1} \phi_{\alpha}^{k}(x) + (\nu_{\alpha} - N_{\alpha}^{0})|E_0|^{-1} \phi_{\alpha}^{0}(x)\right)\\
			 &= \sum_{|\alpha|\leq N_p}\left(\sum_{k=0}^{\infty} N_{\alpha}^{k}|E_{k+1}|^{-1} \phi_{\alpha}^{k+1}(x) - \sum_{k=0}^{\infty}N_{\alpha}^{k} |E_k|^{-1} \phi_{\alpha}^{k}(x)+ \nu_{\alpha}|E_0|^{-1} \phi_{\alpha}^{0}(x)\right)\\
			&= \sum_{k=0}^{\infty}{\sum_{|\alpha|\leq N_p}{\Phi_{\alpha}^{k}(x)}} + \sum_{|\alpha| \leq N_p}\nu_{\alpha}\, |E_0|^{-1} \, \phi_{\alpha}^{0}(x),
		\end{align*}
		where 		$$
	 \Phi_{\alpha}^{k}(x)=N_{\alpha}^{k+1} \left[ |E_{k+1}|^{-1} \phi_{\alpha}^{k+1}(x)-|E_k|^{-1}\phi_{\alpha}^{k}(x) \right],  \; \; k= 0, 1, 2,\ldots.
		$$
		Note that last sum appears since we do not have the vanishing moment conditions on the molecule. 
		
		This allows us to decompose $M$ as follows:
		\begin{align} 
			M &= \sum_{k=0}^{\infty}{(M_k-P_k)}+ \sum_{k=0}^{\infty}{\sum_{|\alpha|\leq N_p}{\Phi_{\alpha}^{k}(x)}} + \sum_{|\alpha| \leq N_p}\nu_{\alpha} \, |E_0|^{-1} \, \phi_{\alpha}^{0}(x) = S_1+S_2+S_3 . \label{M-decomposition}
		\end{align}
		We deal first with the terms $S_1$. From conditions (M1) and (M2) one gets
		\begin{equation} \label{M-j-estimation}
			\| M_k \|_{L^s} \leq 2^{\frac{\lambda}{s}} \, C_{n,s} \, |B_k|^{\frac{1}{s}-\frac{1}{p}} \,\, (2^k)^{-\frac{\lambda}{s}+n \left( \frac{1}{p}-\frac{1}{s} \right)},
		\end{equation}
		and it follows from \eqref{AF-2} that $\|P_k\|_{L^s} \leq  C_{n,p} \, \| M_k \|_{L^s}$. Moreover, from  \eqref{AF-1} we get that $M_k-P_k$ has vanishing moments up the order $N_p$.
				Thus $M_k-P_k$ is a multiple of a $(p,s)$ atom.  Writing $(M_k-P_k)(x) = t_{k} \, a_k(x)$ where $t_{k} = \| M_k-P_k \|_{L^s} \, |B_k|^{\frac{1}{p}-\frac{1}{s}}$, $a_k(x)=\dfrac{M_k(x)-P_k(x)}{\| M_k-P_k \|_{L^s}} \, |B_k|^{\frac{1}{s}-\frac{1}{p}}$ and note that from \eqref{M-j-estimation} one gets
		\begin{align} \label{control-t_j}
			\sum_{k=0}^{\infty}{|t_{k}|^p} \leq 2^{\frac{\lambda p}{s}} \, C_{n,p,s} \, \sum_{k=0}^{\infty}{(2^k)^{-\frac{\lambda p}{s}+n \left( 1-\frac{p}{s} \right)}} = C_{n,p, s,\lambda} < \infty
		\end{align}
		provided $\lambda> n \left( s/p-1 \right)$. We point out that the closer $\lambda$ gets to $n(s/p-1)$, the bigger the constant appearing in \eqref{control-t_j}. 
		
		For $S_2$, we claim that $\Phi_{\gamma}^j(x)$ is a multiple of a $(p,\infty)$ atom with full cancellation conditions.  The cancellation follows from the moment conditions on $\phi_{\alpha}^{k}$.  For the size condition, from H\"older inequality and \eqref{M-j-estimation}, for every $|\alpha|\leq N_p$ and $k\in \N$, 
		one has
		\begin{align*} 
			|N_{\alpha}^{k}|  \leq C_{n,p, s,\lambda} \, |B_{k}|^{1-\frac{1}{p}} (2^{k}r)^{|\alpha|} (2^{k})^{-\frac{\lambda}{s}+n \left( \frac{1}{p}-\frac{1}{s} \right)}.
		\end{align*}
		Hence, since $(2^kr)^{|\alpha|} |\phi_{\alpha}^k(x)|\leq C$ uniformly, it follows
		$$
		|N^{k}_{\alpha}|E_k|^{-1}\phi_{\alpha}^{k}(x)| \leq C_{n,p, s,\lambda} \,  |B_k|^{-\frac{1}{p}}(2^k)^{-\frac{\lambda}{s}+n \left( \frac{1}{p}-\frac{1}{s} \right)}.
		$$
		Therefore, writing $\displaystyle{\sum_{|\alpha| \leq N_p} \Phi_{\alpha}^{j}(x)} =s_{k} \,\, b_{k}(x)$, where $s_{k}= C_{n,p, s,\lambda}  \, (2^k)^{-\frac{\lambda}{n}+n \left( \frac{1}{p}-\frac{1}{s} \right)}$ 
		for some appropriate constants $C_{n,p, s,\lambda}$, we get that $b_k$ are $(p,\infty)$ atoms and		
		\begin{equation*}
			\sum_{k=0}^{\infty}{|s_{k}|^p} = C_{n,p, s,\lambda}\, \sum_{k=0}^{\infty}{(2^k)^{-\frac{\lambda p}{s}+n \left( 1-\frac{p}{s} \right)}} < \infty,
		\end{equation*}
		where again we used that $\lambda> n \left( s/p-1 \right)$. 
		
		Finally, for $S_3$ let
		$$
		a_\omega = \sum_{|\alpha| \leq N_p}\nu_{\alpha} \, |E_0|^{-1} \, \phi_{\alpha}^{0}(x).
		$$
		This function is supported on $E_0 = B_0$ and,  proceeding as in Remark~\ref{remark-molecules}(ii), (M1) and (M2) give
		\begin{align*}
			|\nu_{\alpha}| & = \left|\int_{\R^n}{M(x)(x-x_B)^{\alpha}dx}\right| \lesssim r^{|\alpha|+n\left( 1-\frac{1}{p} \right)}.
		\end{align*}
		The $L^s$-estimate then follows immediately from the fact that $r^{|\alpha|} |\phi_{\alpha}^0(x)|\leq C$:
		\begin{align}
			\left\| \sum_{|\alpha| \leq N_p}\nu_{\alpha} \ |E_0|^{-1}\phi_{\alpha}^{0} \right\|_{L^s} & \leq \sum_{|\alpha|\leq N_p}{|N_{\alpha}^{0}| \ |E_0|^{-1} \left( \int_{E_0}{|\phi_{\alpha}^{0}(x)|^sdx} \right)^{\frac{1}{s}}} \nonumber \\
			& \leq \sum_{|\alpha|\leq N_p}{|\nu_{\alpha}| \, |E_0|^{\frac{1}{s}-1} \ r^{-|\alpha|}} \lesssim \, r^{\,n\left(\frac{1}{s}-\frac{1}{p}\right)}. \nonumber 
		\end{align}
		It remains to show the moment conditions on $a_\omega$, which follow immediately from $(M_3)$, since by the choice of $\nu_\alpha$ and $\phi_{\alpha}^{0}$, the moments of $a_\omega$ are the same as those of $M$. Indeed, for $|\beta| \leq N_p$,
		\begin{align*}
			\int{ a_\omega(x) \, (x-x_B)^{\beta}dx} & =  \sum_{|\alpha|\leq N_p}{\nu_{\alpha}\ \left(|E_0|^{-1} \int_{E_0}{\phi_{\alpha}^{0}(x) (x-x_B)^{\beta}dx} \right)} 
			 =  \nu_{\alpha} =\int_{\R^n}{M(x)(x-x_B)^{\beta}dx}.
		\end{align*}
		Thus $a_\omega$ is a multiple of a $(p,s,\omega)$ atom.
	\end{proof}
	
Since $(p, s)$ atoms are automatically $(p,s,\lambda, \omega)$ molecules for any choice of $\lambda$ and $\omega$ in Definition~\ref{hpmolecule}, we can combine the atomic decomposition ~\ref{goldberg-decomp} with Proposition~\ref{molecular-decomp} (see also the remarks preceding Proposition~\ref{prop-psomega-atom})	 to get that $f \in h^p(\Rn)$ if and only if there exists a sequence $\{M_j\}$ of $(p,s,\lambda,\omega)$ molecules and a sequence $\{\lambda_j\} \in \ell^p(\C)$ such that 	
$\displaystyle{f = \sum_{j \in \N} \lambda_j \, M_j }$ in the sense of distributions and in the $h^p(\Rn)$ norm, and $\| f \|_{h^p}$ is comparable to the infimum of $\{ \lambda_j\}_{\ell^p}$ over all such  representations. As pointed out in the proof of the proposition, the constant in this comparison will blow up as $\lambda$ gets closer to $n(s/p-1)$.

	\section{Hardy's inequality} \label{section-hardy-inequality}
	
	In this section, as an application of the new molecules presented in Definition \ref{hpmolecule}, we prove a version of Hardy's inequality on $h^{p}(\R^{n})$ for $0<p \leq 1$, originally stated by {Goldberg in \cite{Goldberg2}.}
	
	\begin{theorem}\label{teorema4.1}
	Let $0<p \leq 1$. Then, there exists $C>0$ such that
		\begin{equation}
			\int_{\Rn} \frac{|\widehat{f}(\xi)|^p}{(1+|\xi|)^{n(2-p)}}d\xi \leq C \| f \|_{h^p}^{p} \quad \forall \ f \in h^p(\Rn).
		\end{equation}
	\end{theorem}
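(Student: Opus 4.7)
\medskip
\noindent\textbf{Proof plan.} The plan is to combine the atomic decomposition of $h^p(\Rn)$ with the $p$-subadditivity of $t\mapsto t^p$ to reduce the inequality to a uniform bound on a single atom (equivalently, on a single molecule, since $(p,s)$-atoms are themselves $(p,s,\lambda,\omega)$-molecules). Applying Goldberg's decomposition \eqref{goldberg-decomp} with $(p,2)$-atoms, I write $f=\sum_j\lambda_j a_j$ in $\S'(\Rn)$ with $\bigl(\sum_j|\lambda_j|^p\bigr)^{1/p}\lesssim \|f\|_{h^p}$. Since the Fourier transform is continuous on $\S'(\Rn)$ and $p\le 1$, the pointwise estimate $|\widehat f(\xi)|^p\le \sum_j |\lambda_j|^p |\widehat{a_j}(\xi)|^p$ holds, so the theorem reduces to showing that
\[
I(a):=\int_{\Rn}\frac{|\widehat{a}(\xi)|^p}{(1+|\xi|)^{n(2-p)}}\,d\xi \le C
\]
holds uniformly for a $(p,2)$-atom $a$ supported in a ball $B=B(x_B,r)$. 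Summing over $j$ and using the control on $\sum_j|\lambda_j|^p$ then yields the desired inequality.

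I would split the proof of the uniform bound according to whether $r\ge 1$ or $r<1$, and in each case split $I(a)$ at the frequency threshold $R:=\max(1,1/r)$. On the high-frequency piece $|\xi|>R$, I apply H\"older's inequality with exponents $(2/p,2/(2-p))$ and Plancherel's identity to obtain
\[
\int_{|\xi|>R}\frac{|\widehat{a}|^p}{(1+|\xi|)^{n(2-p)}}\,d\xi\ \le\ \|a\|_{L^2}^p\left(\int_{|\xi|>R}(1+|\xi|)^{-2n}d\xi\right)^{(2-p)/2}\lesssim r^{n(p/2-1)+n(2-p)/2\cdot \mathbf{1}_{r<1}},
\]
which in both cases is bounded by a constant upon using $\|a\|_{L^2}\le r^{n(1/2-1/p)}$. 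On the low-frequency piece $|\xi|\le R$, when $r\ge 1$ (no cancellation required) the bound $|\widehat a(\xi)|\le \|a\|_{L^1}\le |B|^{1/2}\|a\|_{L^2}\lesssim r^{n(1-1/p)}$ combined with $r\ge 1$, $p\le 1$ and integration over a bounded set is enough. When $r<1$ I exploit the vanishing moments up to order $N_p$ via a Taylor expansion of $e^{-ix\xi}$ around $x_B$, obtaining the standard estimate
\[
|\widehat{a}(\xi)|\lesssim r^{N_p+1+n(1-1/p)}|\xi|^{N_p+1},
\]
and then integrate $|\xi|^{p(N_p+1)}(1+|\xi|)^{-n(2-p)}$ over $|\xi|\le 1/r$, further splitting the domain at $|\xi|=1$ to handle the different asymptotic behaviors of the denominator.

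The main obstacle is the exponent bookkeeping in the last step: on the annulus $1\le|\xi|\le 1/r$, the integrand reduces to $|\xi|^{p(N_p+1)-n(2-p)}$ in $\Rn$, giving a one-dimensional integral of $\rho^{p(N_p+1)-n(1-p)-1}$. The key inequality $N_p+1>\gamma_p=n(1/p-1)$, which holds always since $N_p=\lfloor\gamma_p\rfloor$ (and even in the borderline cases $p=n/(n+k)$ one has $N_p+1=\gamma_p+1>\gamma_p$), ensures that the exponent $p(N_p+1)-n(1-p)$ is strictly positive; hence the integral grows like $r^{-[p(N_p+1)-n(1-p)]}$, exactly cancelling the prefactor $r^{p(N_p+1)+n(p-1)}$ coming from the Taylor estimate. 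The contribution from $|\xi|\le 1$ is bounded by a multiple of $r^{p(N_p+1)-n(1-p)}$, which is itself $\le 1$ since $r<1$ and the exponent is positive. Once $I(a)\le C$ is established uniformly, summation gives $\int(1+|\xi|)^{-n(2-p)}|\widehat f|^p\,d\xi\le C\sum_j|\lambda_j|^p\lesssim \|f\|_{h^p}^p$ as required.
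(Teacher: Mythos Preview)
Your argument is correct and essentially self-contained: the high-frequency H\"older--Plancherel bound, the $\|a\|_{L^1}$ bound for large balls, and the Taylor estimate $|\widehat a(\xi)|\lesssim r^{N_p+1-\gamma_p}|\xi|^{N_p+1}$ for small balls all check out, and the key exponent inequality $p(N_p+1)-n(1-p)>0$ is exactly $N_p+1>\gamma_p$, which always holds. The reduction via $|\widehat f|^p\le\sum_j|\lambda_j|^p|\widehat{a_j}|^p$ is slightly informal (one should first obtain the uniform bound, deduce a.e.\ absolute convergence of $\sum\lambda_j\widehat{a_j}$, and then identify the limit with $\widehat f$), but the paper's own passage from the molecular inequality to the theorem is at the same level of detail.

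The route, however, is genuinely different from the paper's. The paper deduces Theorem~\ref{teorema4.1} from Lemma~\ref{eq-Hardy}, which establishes the sharper inequality $\int|\widehat M|^p(a\omega+|\xi|)^{-n(2-p)}d\xi\le C$ uniformly for $(p,s,\lambda,\omega)$ \emph{molecules}; this in turn rests on Lemma~\ref{control-fourier-molecule}, a Fourier bound containing an extra term $\sum_{|\alpha|\le N_p}|\xi|^{|\alpha|}\bigl|\int M(x)(x-x_B)^\alpha dx\bigr|$ coming from the nonvanishing moments, and the log-type condition (M3) is precisely what controls that term when $|\alpha|=\gamma_p\in\Z$. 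You bypass all of this by working directly with Goldberg's $(p,2)$-atoms, whose moments vanish exactly when $r<1$, so the extra moment term never appears and the borderline case $\gamma_p\in\Z$ requires no special treatment. Your approach is shorter and more elementary for the theorem as stated; the paper's approach yields the stronger molecular inequality (with the refined weight $a\omega+|\xi|$ recovering the homogeneous $H^p$ case at $\omega=0$) and showcases the new approximate-cancellation framework, which is the main point of the section.
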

	
	The case $n=1$ and $p=1$ was proved by Dafni and Liflyand in \cite{GaliaLiflyand}. The main ingredient is the following pointwise  control of the Fourier transform of a  $(p,s,\lambda,\omega)$ molecule on $h^p(\Rn)$.  This result resembles the decay of the  Fourier transform decay for standard atoms on $H^{p}(\Rn)$ (see \cite[Theorem 7.20 p. 337]{GarciaFranciaWeighted}), but we need to account for the non-vanishing moments.  Moreover, due to the weaker decay at infinity of the molecules, namely condition (M2) compared to compact support, we cannot get unlimited smoothness of the Fourier transform.  The parameter $\lambda$ is what determines this limitation.

	\begin{lemma} \label{control-fourier-molecule}
		Let $0<p\leq 1 \leq s< \infty$ with $p<s$ and $\lambda> n \left(\frac{s}{p}-1  \right)$.  Suppose $M$ satisfies conditions (M1) and (M2) with respect to the ball $B=B(x_B,r) \subset \Rn$.  Then the Fourier transform of $M$ satisfies
		\begin{align}
		\label{eq-FT}
			|\widehat{M}(\xi)| &\lesssim \, |\xi|^{\gamma} \, r^{\gamma - \gamma_p} + \sum_{|\alpha| \leq N} \, |\xi|^{|\alpha|} \left| \int_{\Rn} M(x)(x-x_B)^{\alpha}dx \right|
		\end{align}
		for any $\gamma \in (\gamma_p,\frac \lambda s - \frac n{s'})$ and integer $N$ with $N  < \gamma \le  N + 1$.
	\end{lemma}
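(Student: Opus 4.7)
The plan is to write $\widehat{M}(\xi)=e^{-2\pi i x_B\cdot \xi}\int M(x) e^{-2\pi i (x-x_B)\cdot \xi}\,dx$ and then Taylor-expand the inner exponential about zero in the single variable $t=(x-x_B)\cdot \xi$. Writing
\[
e^{-2\pi i(x-x_B)\cdot \xi}=\sum_{|\alpha|\le N}\frac{(-2\pi i\xi)^\alpha}{\alpha!}(x-x_B)^\alpha + R_N(x,\xi),
\]
the sum, once integrated against $M$, produces exactly the second term on the right-hand side of \eqref{eq-FT} (up to harmless constants and the unimodular prefactor). So the whole game reduces to controlling $\int |M(x)|\,|R_N(x,\xi)|\,dx$.

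For the remainder, I would use the elementary estimate $\bigl|e^{it}-\sum_{k\le N}(it)^k/k!\bigr|\lesssim |t|^\gamma$ uniformly in $t\in\R$ for any $\gamma\in(N,N+1]$: the Taylor bound $|t|^{N+1}/(N+1)!$ handles $|t|\le 1$ (where $|t|^{N+1}\le |t|^\gamma$), while for $|t|\ge 1$ one bounds the exponential and the truncated Taylor polynomial separately by $C|t|^N\le C|t|^\gamma$. Applied with $t=-2\pi(x-x_B)\cdot \xi$, this gives $|R_N(x,\xi)|\lesssim |\xi|^\gamma |x-x_B|^\gamma$, so it suffices to show
\[
\int_{\Rn}|M(x)|\,|x-x_B|^\gamma\,dx\ \lesssim\ r^{\gamma-\gamma_p},\qquad r=r(B).
\]

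To prove this inequality, I would split the integral into the pieces over $B$ and over $B^c$. On $B$, bound $|x-x_B|^\gamma\le r^\gamma$ and apply H\"older with exponent $s$: by (M1),
\[
\int_B |M(x)|\,|x-x_B|^\gamma dx\ \le\ r^\gamma\|M\|_{L^s(B)}\,|B|^{1/s'}\ \lesssim\ r^{\gamma+n(1/s-1/p)+n/s'}=r^{\gamma-\gamma_p}.
\]
On $B^c$, write $|x-x_B|^\gamma=|x-x_B|^{\lambda/s}\cdot|x-x_B|^{\gamma-\lambda/s}$ and apply H\"older with exponents $s,s'$. The $L^s$ factor is controlled by (M2), while the $L^{s'}$ factor requires $(\gamma-\lambda/s)s'<-n$, i.e.\ $\gamma<\lambda/s-n/s'$ — precisely the upper bound on $\gamma$ in the hypothesis. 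A direct computation of $\int_{|x-x_B|\ge r}|x-x_B|^{(\gamma-\lambda/s)s'}dx$ then yields the factor $r^{\gamma-\lambda/s+n/s'}$, and multiplying by the (M2) bound $r^{\lambda/s+n(1/s-1/p)}$ again produces $r^{\gamma-\gamma_p}$.

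The only delicate points are the uniform (in $t$) Taylor bound with the fractional exponent $\gamma$, and verifying that the two restrictions $\gamma>\gamma_p$ (needed so the result is meaningful and sharp) and $\gamma<\lambda/s-n/s'$ (needed for $L^{s'}$-integrability of the weight on $B^c$) are exactly what the hypotheses guarantee; no moment or cancellation information on $M$ is used in this lemma, as it should be.
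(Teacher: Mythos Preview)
Your proof is correct and in fact cleaner than the paper's. The paper argues in two cases: when $\gamma=N+1$ it does exactly what you do (Taylor remainder of order $N+1$, then bound $\int|M||x-x_B|^{N+1}$ via (M1)--(M2) and H\"older), but for $N<\gamma<N+1$ it instead splits the Fourier integral at $|x-x_B|=|\xi|^{-1}$, applies H\"older with a modified weight $|x|^{\lambda'/s}$, $\lambda'=s(\gamma+n/s')$, on the far piece, and re-expands Taylor on the near piece. Your observation that the one-variable remainder of $e^{it}$ obeys the interpolated bound $|t|^{\gamma}$ for all $t$ (via $|t|^{N+1}$ for $|t|\le 1$ and $|t|^{N}$ for $|t|\ge 1$) replaces that splitting by a single stroke, yielding the weighted $L^1$ bound $\int|M||x-x_B|^{\gamma}\,dx\lesssim r^{\gamma-\gamma_p}$ directly. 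The paper's route has the minor advantage of making the role of $|\xi|$ explicit through the scale $|\xi|^{-1}$, but yours is shorter and treats integer and non-integer $\gamma$ uniformly. One small correction: the hypothesis $\gamma>\gamma_p$ is not actually used anywhere in the argument (yours or the paper's); it only ensures the first term in \eqref{eq-FT} is small for small $r$, which is what the subsequent application needs.
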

	
	\begin{proof}
	
		Since the absolute value of the Fourier transform is preserved under translation of the function, we may assume $x_B = 0$. For $\xi = 0$, we see that equality holds by in \eqref{eq-FT} by considering the  $\alpha = 0$ term in the sum on the right-hand-side, so we need only  consider $\xi \neq 0$.

	Suppose first that $\gamma = N + 1 < \frac \lambda s - \frac n{s'}$.  Denoting $e^{-2\pi i x \cdot \xi}$ by $\varphi(x)$ , we write 
	$P_{N,\varphi, 0}(x) = \sum_{|\alpha| \leq N} C_{\alpha} \, (\partial^{\alpha}\varphi)(0)x^{\alpha}$  for its Taylor polynomial of order $N$ at $0$, and use the formula for the remainder to get, for $t \in (0,1)$,
		\begin{align}
	|\widehat{M}(\xi)| 
	\nonumber	& =  \left| \int_\Rn M(x)  \left[\varphi(x) - P_{N,\varphi, 0}(x)  \right] dx  +  \sum_{|\alpha| \leq N} C_{\alpha} \, (\partial^{\alpha}\varphi)(0) \int_\Rn M(x)x^{\alpha}dx  \right| \\
	\nonumber	& \le	\left| \int_\Rn M(x) \sum_{|\alpha| = N+1} C_{\alpha} \,  (\partial^{\alpha}\varphi)(tx)x^{\alpha}dx\right|  +  \sum_{|\alpha| \leq N} C_{\alpha} \, |2\pi\xi|^{|\alpha|} \left| \int_\Rn M(x)x^{\alpha}dx  \right| \\
	\label{equation-1}		&\lesssim  |\xi|^{N + 1}\int_\Rn |M(x)|\, |x|^{N + 1}dx +  \sum_{|\alpha| \leq N}  |\xi|^{|\alpha|} \left| \int_\Rn M(x)x^{\alpha}dx  \right|. 
		\end{align}
		Similarly to Remark~\ref{remark-molecules} (ii), from conditions (M1) and (M2) of the molecule and H\"older's inequality, one has
		\begin{align*}
			\int_{\Rn} |M(x)| \, |x|^{N+1}dx 
			&\leq r^{\frac{n}{s'}+N+1} \, \| M \|_{L^s(B)} + \| M \, |\cdot|^{\frac{\lambda}{s}} \, \|_{L^s(B^c)} \, \| \, |\cdot|^{-\frac{\lambda}{s}+N +1} \|_{L^{s'}(B^c)} \\
			& = r^{\frac{n}{s'}+N+1} \, \| M \|_{L^s(B)} + r^{-\frac{\lambda}{s}+N+1+\frac{n}{s'}} \, \| M \, |\cdot|^{\frac{\lambda}{s}} \, \|_{L^s(B^c)} \\
			& \lesssim r^{N+1-\gamma_p},
		\end{align*}
		where the convergence of the integral follows from the assumption that  $N + 1 < \frac \lambda s - \frac n{s'}$. This gives the result in the case $\gamma =  N + 1$.
		
		Now suppose $\gamma < N + 1$.  Recalling that $\xi \neq 0$, we write
	 	$$
	\widehat{M}(\xi)  = \int_{|x| \ge |\xi|^{-1}}{e^{-2\pi i x \cdot \xi}}M(x)dx+\int_{|x| \le |\xi|^{-1}}{e^{-2\pi i x \cdot \xi}}M(x)dx =: I_1 + I_2. 
$$
We estimate the first integral using H\"older's inequality, together with the bound the $L^s(\Rn)$ norm of $M(x)|x|^{\frac{\lambda'}{s}}$ with $\lambda' = s(\gamma +  \frac n{s'}) < \lambda$ (see Remark~\ref{remark-molecules} (i)), as follows:
	\begin{align*} |I_1| &\leq \int_{|x| \ge |\xi|^{-1}}|M(x)| dx \le
	  \| M \, |\cdot|^{\frac{\lambda'}{s}} \, \|_{L^s(\Rn)} \, \|\,|\cdot|^{-\frac{\lambda'}{s}}\|_{L^{s'}({|x| \geq |\xi|^{-1})}} \le r^{\gamma-\gamma_p} |\xi|^{\gamma}.
	\end{align*}
	For the second integral, we again proceed via the Taylor expansion of $\varphi(x)=e^{-2\pi i x \cdot \xi}$, to get, as in \eqref{equation-1}
	\begin{align*} |I_2| &\lesssim  |\xi|^{N + 1}\int_{|x| \le |\xi|^{-1}} |M(x)|\, |x|^{N + 1}dx +  \sum_{|\alpha| \leq N}  |\xi|^{|\alpha|} \left|\int_{|x| \le |\xi|^{-1}} M(x)x^{\alpha}dx  \right| \\
	&\le   |\xi|^{N + 1}\int_{|x| \le |\xi|^{-1}} |M(x)|\, |x|^{\frac{\lambda'}s}|x|^{N + 1 -\frac{\lambda'}s}dx + \sum_{|\alpha| \leq N}  |\xi|^{|\alpha|}  \left| \int_\Rn M(x)x^{\alpha}dx - \int_{|x| \ge |\xi|^{-1}} M(x)x^{\alpha}dx  \right| \\
	&\le  |\xi|^{N + 1}  \| M \, |\cdot|^{\frac{\lambda'}{s}} \, \|_{L^s(\Rn)} \, \| \, |\cdot|^{N+1-\frac{\lambda'}{s}} \|_{L^{s'}(|x|\leq |\xi|^{-1})} \\
	& \quad \quad \quad + \sum_{|\alpha| \leq N}  |\xi|^{|\alpha|}  \int_{|x| \ge |\xi|^{-1}} |M(x)||x|^{\alpha}dx  
	+\sum_{|\alpha| \leq N}  |\xi|^{|\alpha|}  \left| \int_\Rn M(x)x^{\alpha}dx \right| \\
& \lesssim  |\xi|^{N + 1} r^{\frac{\lambda'}{s}-\frac{n}{s'}-\gamma_p}  |\xi|^{-(N + 1 -\frac{\lambda'}s +  \frac{n}{s'})} 
+ \sum_{|\alpha| \leq N}  |\xi|^{|\alpha|}   \| M \, |\cdot|^{\frac{\lambda'}{s}} \, \|_{L^s(\Rn)} \, \| \, |\cdot|^{|\alpha| - \frac{\lambda'}{s}} \|_{L^{s'}(|x| \ge |\xi|^{-1})} \\
& \quad \quad \quad +\sum_{|\alpha| \leq N}  |\xi|^{|\alpha|}  \left| \int_\Rn M(x)x^{\alpha}dx \right| \\
&\lesssim r^{\gamma - \gamma_p} |\xi|^{\gamma}
 +\sum_{|\alpha| \leq N}  |\xi|^{|\alpha|}  \left| \int_\Rn M(x)x^{\alpha}dx \right|. \\
	\end{align*}
	Here we have used that $\gamma = \frac {\lambda'} s - \frac n{s'} < N + 1$ for the local integrability and that $|\alpha| \leq N < \gamma = \frac{\lambda'}{s}-\frac{n}{s'}$ implies $s'(|\alpha| - \frac{\lambda'}{s}) < -n$.
	This concludes the case  $\gamma  < N + 1$.
	\end{proof}
	
For a molecule, the above estimate on the Fourier transform and the control of the moments allows us to prove the following more refined version of Hardy's inequality:

\begin{lemma}
\label{eq-Hardy}
If $1\leq s\leq 2$ with $p<s$ and $M$ is a $(p,s,\lambda, \omega)$ molecule in $h^p(\Rn)$ associated to the ball $B=B(x_B,r)$, then for $a > 0$, 
		\begin{equation}
		\label{eq-kappa}
			\int_{\Rn} \frac{|\widehat{M}(\xi)|^p}{(a\omega+|\xi|)^{n(2-p)}} d\xi \leq C_{a, \omega}.
		\end{equation}
\end{lemma}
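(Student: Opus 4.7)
The plan is to split the integral at the natural scale $r = r(B)$ of the molecule. Writing
\[
\int_{\Rn} \frac{|\widehat M(\xi)|^p}{(a\omega + |\xi|)^{n(2-p)}}\, d\xi = I_{\mathrm{low}} + I_{\mathrm{high}},
\]
where $I_{\mathrm{low}}$ is the integral over $\{|\xi| \le 1/r\}$ and $I_{\mathrm{high}}$ over $\{|\xi| > 1/r\}$, I would handle each piece with different tools. For $I_{\mathrm{high}}$ the hypothesis $1 \le s \le 2$ allows use of Hausdorff--Young, $\|\widehat M\|_{L^{s'}} \le \|M\|_{L^s}$, together with the global bound $\|M\|_{L^s(\Rn)} \lesssim r^{n(1/s - 1/p)}$ that follows from (M1) and Hölder's inequality applied to (M2) on $B^c$ (cf.\ Remark~\ref{remark-molecules}(i)). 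Applying Hölder with the conjugate exponents $s'/p$ and $s'/(s'-p)$ to the pair $|\widehat M|^p$ and $(a\omega+|\xi|)^{-n(2-p)}\chi_{\{|\xi|>1/r\}}$, and bounding the weight by $|\xi|^{-n(2-p)}$, an explicit power computation shows that all $r$-exponents cancel and $I_{\mathrm{high}} \lesssim 1$ uniformly in $M$. (The borderline case $s=1$, which forces $p<1$, is easier: use $|\widehat M(\xi)| \le \|M\|_{L^1} \lesssim r^{-\gamma_p}$ and integrate directly.)

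For $I_{\mathrm{low}}$, I would apply Lemma~\ref{control-fourier-molecule} with a parameter $\gamma$ chosen in $(\gamma_p, \min(N_p + 1, \lambda/s - n/s'))$; this interval is nonempty by $\lambda > n(s/p - 1)$, and the corresponding integer $N$ in the lemma equals $N_p$. Using the subadditivity $(A+B)^p \le A^p + B^p$ valid for $0 < p \le 1$, one obtains
\[
|\widehat M(\xi)|^p \lesssim |\xi|^{p\gamma}\, r^{p(\gamma - \gamma_p)} + \sum_{|\alpha| \le N_p} |\xi|^{p|\alpha|}\, m_\alpha^p, \qquad m_\alpha := \Bigl| \int M(x)(x-x_B)^\alpha\, dx \Bigr|.
\]
The first term is routine: bounding $(a\omega+|\xi|)^{n(2-p)} \ge |\xi|^{n(2-p)}$ and using $p\gamma > p\gamma_p = n(1-p)$ to ensure integrability at the origin gives $\int_{|\xi| \le 1/r} |\xi|^{p\gamma - n(2-p)}\, d\xi \approx r^{-p(\gamma - \gamma_p)}$, which exactly cancels the prefactor.

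The main obstacle is the moment sum, and this is where conditions (M3) come in decisively. For indices $|\alpha| < \gamma_p$, the bound $m_\alpha \le \omega$ combined with the change of variables $\xi = a\omega\,\eta$ shows that $\int_\Rn |\xi|^{p|\alpha|}(a\omega+|\xi|)^{-n(2-p)}\, d\xi \approx (a\omega)^{p(|\alpha| - \gamma_p)}$, so each such term contributes at most a constant of the form $a^{p(|\alpha|-\gamma_p)}\omega^{p(1+|\alpha|-\gamma_p)}$, finite for fixed $a, \omega > 0$. The truly delicate case is the critical one $|\alpha| = \gamma_p = N_p$, available only when $\gamma_p \in \Z$: here $p|\alpha| = n(1-p)$ is precisely the exponent at which the integral develops a logarithmic singularity. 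In the regime $a\omega r \le 1$, splitting $\{|\xi| \le 1/r\}$ into $\{|\xi| \le a\omega\}$ and $\{a\omega < |\xi| \le 1/r\}$ and estimating the denominator on each piece yields $\int_{|\xi|\le 1/r} |\xi|^{n(1-p)}(a\omega+|\xi|)^{-n(2-p)}\, d\xi \approx 1 + \log(1/(a\omega r))$. The logarithmic bound $m_\alpha \le \varphi_p(r) = [\log(1 + 1/(\omega r))]^{-1/p}$ from (M3) is precisely what is needed to compensate, since $\log(1+1/(\omega r))$ and $\log(1/(a\omega r))$ are comparable up to an $a$-dependent constant when $a\omega r$ is small, so that $\varphi_p(r)^p$ absorbs the logarithmic growth. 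The complementary regime $a\omega r \ge 1$ is easier: then $|\xi| \le 1/r \le a\omega$ throughout and the denominator is uniformly comparable to $a\omega$. Combining all estimates gives $I_{\mathrm{low}} \lesssim C_{a,\omega}$, completing the proof.
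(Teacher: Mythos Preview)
Your proposal is correct and follows essentially the same approach as the paper: the same split at $|\xi|=1/r$, H\"older plus Hausdorff--Young for the high-frequency piece, and Lemma~\ref{control-fourier-molecule} with $N=N_p$ followed by the moment condition (M3) for the low-frequency piece. The only cosmetic differences are that the paper handles the critical case $|\alpha|=\gamma_p$ via the single change of variables $\xi\mapsto a\omega\,\eta$ (leading to $\int_1^{1+(a\omega r)^{-1}} t^{-1}\,dt=\log(1+(a\omega r)^{-1})$, which covers both of your regimes $a\omega r\lessgtr 1$ at once), whereas you split explicitly at $|\xi|=a\omega$; and you single out $s=1$, which in the paper's computation is absorbed into the general H\"older estimate with exponent pair $(s'/p,\,s'/(s'-p))$.
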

	 
In the homogeneous case, $\omega = 0$, we recover Hardy's inequality for $H^p(\R^n)$. For $\omega > 0$, taking $a = \omega^{-1}$ shows that Goldberg's Hardy inequality holds uniformly for molecules with a constant depending on $\omega$. Applying the Fourier transform to the molecular decomposition of $f$ gives the result of the Theorem~\ref{teorema4.1}. 	

		\noindent \textit{Proof of Lemma~\ref{eq-Hardy}. }
		To show \eqref{eq-kappa} we split integral in the following way:
		$$
		\int_{\Rn} \frac{|\widehat{M}(\xi)|^p}{(a \omega+|\xi|)^{n(2-p)}} d\xi = \int_{|\xi|<r^{-1}} + \int_{|\xi|>r^{-1}} := I_1+I_2.
		$$
		\textbf{Control of $I_2$:} 
		Applying the H\"older and Hausdorff-Young inequalities, one gets
		\begin{align*}
			\int_{|\xi|>r^{-1}} \frac{|\widehat{M}(\xi)|^p}{(a \omega +|\xi|)^{n(2-p)}} d\xi 
			& \leq \|\widehat{{M}}\|_{s'}^{p} \, \left( \int_{|\xi|>r^{-1}}{|\xi|^{-\frac{n(2-p)}{1-p/s'}}d\xi} \right)^{1-\frac{p}{s'}}  \\
			& \lesssim \|M\|_{s}^{p}  \, r^{n(2-p) - n(1-\frac{p}{s'})}\left( \int_{|\xi|>1}{|\xi|^{-\frac{n(2-p)}{1-p/s'}}d\xi} \right)^{1-\frac{p}{s'}}  \\
			&\leq C.
		\end{align*}
		Here we've used condition (M1), and the integrability of the second term follows since
		$$
		1>p\left(1-\frac{1}{s'}\right) \quad \Leftrightarrow \quad -\frac{n(2-p)}{1-p/s'} < -n.
		$$
		\textbf{Control of $I_1$:} Taking $N = N_p$ and $\gamma \in  \left(\gamma_p,\frac \lambda s - \frac n{s'}\right) \cap \left(N_p, N_p + 1\left]\right.\right.$  in Lemma~\ref{control-fourier-molecule}, one has
			\begin{align*}
			I_1 &\lesssim r^{p(\gamma - \gamma_p)} \int_{|\xi|<r^{-1}}|\xi|^{p\gamma} (1+|\xi|)^{n(p-2)}d\xi \\
			& \quad \quad \quad \quad \quad \quad \quad \quad \quad \quad +  \sum_{|\alpha| \leq N_p} \left| \int_{\Rn}M(x)(x-x_B)^{\alpha}dx \right|^p \int_{|\xi|<r^{-1}} |\xi|^{|\alpha|p} (1+|\xi|)^{n(p-2)} d\xi. \\
			&\quad \quad \quad \quad \quad \quad \quad \quad \quad \quad := I_3 + I_4
		\end{align*}
		For $I_3$, using that $(1+|\xi|)^{n(p-2)} \leq |\xi|^{n(p-2)}$ we get
		\begin{align*}
			 |I_3| & \le r^{p(\gamma - \gamma_p)}  \int_{|\xi|<r^{-1}} |\xi|^{n(p-2)+p\gamma}d\xi \\
			& \simeq r^{\,p\gamma +  n(p-1)} \, r^{-p\gamma-n(p-2)-n} = 1,
		\end{align*}
		where the integrability follows from $p\gamma > p\gamma_p=n(1-p)$.
		
		For $I_4$,  using the approximate moment conditions (M3) of the molecule when $\omega > 0$, we get
		\begin{align*}
			\sum_{|\alpha|\leq N_p} & \left| \int_{\Rn} M(x)(x-x_B)^{\alpha}dx\right|^{p}  \int_{|\xi|<r^{-1}} |\xi|^{|\alpha|p} (a \omega+|\xi|)^{n(p-2)} d\xi  \\
			& = \sum_{|\alpha|\leq N_p} \left| \int_{\Rn} M(x)(x-x_B)^{\alpha}dx\right|^{p} 
			 a \omega^{np - n + |\alpha|p}\int_{|\xi|<(a \omega r)^{-1}} |\xi|^{|\alpha|p} (1+|\xi|)^{n(p-2)} d\xi 
			 \\
			&\leq  \sum_{|\alpha|\leq N_p} 
			\left| \int_{\Rn} M(x)(x-x_B)^{\alpha}dx\right|^{p}
			 (a \omega)^{np - n + |\alpha|p} \int_{1}^{1+(a \omega r)^{-1}} t^{p|\alpha|+np-n-1}dt \\
			& \leq \sum_{|\alpha|<\gamma_p} \omega^p \,  (a\omega)^{(|\alpha|-\gamma_p)p}\int_{1}^{\infty} t^{(|\alpha| - \gamma_p)p-1}dt + \sum_{\substack{|\alpha|=\gamma_p = N_p}} \left[\log \left(1+\frac{1}{\omega r} \right) \right]^{-1} 
			\int_{1}^{1+(a\omega r)^{-1}} t^{-1}dt \\  
			&\leq C_{a,\omega,p}  + \sum_{\substack{|\alpha|=N_p \\ N_p \in \Z}}\left[ \log \left(1+\frac{1}{\omega r} \right) \right]^{-1} \log \left(1+ \frac{1}{a \omega r} \right) \\
			& \leq C_{a,\omega}.
		\end{align*}

	\section{Continuity of inhomogeneous versions of Calder\'on-zygmund-type operators} \label{section-applications}
	
	In this section, we present new results on the continuity of inhomogeneous Calder\'on-Zygmund-type operators on $h^p(\Rn)$ as an application of the approximate atom and molecule tools presented in Section \ref{section-approximate-atoms-molecules}.
 
	\label{applications}
	\subsection{Boundedness of inhomogeneous Calder\'on-Zygmund-type operators} \label{subsection-ICZO}
	
	Ding, Han and Zhu in \cite{DingHanZhu2020} introduced the following inhomogeneous version of classical Calder\'on-Zygmund operators.  A locally integrable function defined on $\R^{2n}$ away from the diagonal is called an {\em $(\mu, \delta)$-inhomogeneous standard kernel} if there exist $\mu>0$ and $0<\delta\leq 1$ such that
	\begin{equation} \label{inhomogeneous-kernel}
		|K(x,y)| \leq C \ \min{\left\{ \frac{1}{|x-y|^n}, \ \frac{1}{|x-y|^{n+\mu}} \right\}}, \quad \quad x \neq y,
	\end{equation} 
	and
	\begin{equation} \label{holder-condition}
		|K(x,y)-K(x,z)|+|K(y,x)-K(z,x)| \leq C \dfrac{|y-z|^{\delta}}{|x-z|^{n+ \delta}}
	\end{equation}
	for all $|x-z| \geq 2|y-z|$. A linear and bounded operator $T: \mathcal{S}(\R^{n}) \rightarrow \mathcal{S}'(\R^{n})$ is an \textit{inhomogeneous Calder\'{o}n-Zygmund operator} if the following properties are satisfied:
	\begin{enumerate}
		\item[(i)] $T$ extends to a continuous operator from $L^2(\Rn)$ to itself; 
		\item[(ii)] $T$ is associated to an $(\mu, \delta)-$inhomogeneous standard kernel given (formally) by 
		$$\langle Tf,g \rangle = \int{\int{K(x,y)f(y)g(x)dy}dx}, \quad \quad \forall \ f,g \in \mathcal{S}(\Rn) \mbox{ with disjoint supports.} $$
	\end{enumerate}
	
	An important example of \textit{inhomogeneous Calder\'{o}n-Zygmund operators} are the standard pseudodifferential operators in the H\"ormander class $OpS^{0}_{1,0}(\Rn)$. In fact, it is well known (see \cite[Theorem 1.1 (e)]{AlvarezHounie}) that the kernel associated to this class of operators satisfies the pointwise estimate 
	$$
	|\partial_{y} K(x,y)| \leq C \, |x-y|^{-1-n}, \quad \quad x \neq y,
	$$ 
	and then \eqref{holder-condition} follows by the mean value theorem. The estimate \eqref{inhomogeneous-kernel} can be directly verified from the pseudo-local property \cite[Theorem 1.1 (a)]{AlvarezHounie}.
	
	We now establish results on the continuity of inhomogeneous Calder\'{o}n-Zygmund operators on  $h^p(\Rn)$ for $0<p\leq 1$ assuming a weaker $L^s$ integral-type condition on the kernel, $1 \leq s < \infty$, given by
	\begin{equation} \label{Ls-hormander}
		\displaystyle{\left( \int_{A_j(z,r)}{|K(x,y)-K(x,z)|^s+|K(y,x)-K(z,x)|^sdx} \right)^{\frac{1}{s}} \lesssim |A_j(z,r)|^{\frac{1}{s}-1} \, 2^{-j\delta}} \quad\mbox{ for } |y - z| < r,
	\end{equation}
	where $0<r<1$ , $A_j(z,r) := \left\{ x \in \Rn: \ 2^{j}r \leq |x-z| < 2^{j+1}r \right\}$,  $j \in \N$ and $\Gd>0$. Condition \eqref{Ls-hormander} is weaker than \eqref{holder-condition}; moreover, an $L^{s_{1}}$ condition  is stronger than an $L^{s_{2}}$ condition provided $s_{1}>s_{2}$, hence the $L^1$ integral-type condition is the weakest one. Continuity results for this class of operators on $H^{p}(\R^{n})$ were recently studied by {the last two authors} in \cite{VasconcelosPicon}.
	
	Next, we want to express the cancellation condition in term of $T^{\ast}(x^{\alpha})$ (for further details see \cite[p. 23]{MeyerCoifman1997}).
	
	\begin{definition} \label{T*def}
		Let $N \in \mathbb{N} \cup \{0\}$. We denote by $L^{2}_{c,N}(\Rn)$ the space of all $g \in L^2(\Rn)$ compactly supported functions such that $\int{g(x)x^{\alpha}}=0$ for all $|\alpha| \leq N$.  For such an $\alpha$, define $T^{*}(x^{\alpha})$ in the distributional sense by
		\begin{equation} \label{T*}
			\langle T^{*}(x^{\alpha}), g \rangle = \langle x^{\alpha}, T(g)  \rangle = \int_{\R^{n}}x^{\alpha}Tg(x)dx, \quad \quad \forall\, g \,\in L^{2}_{c,N}(\R^n). 
		\end{equation} 
	\end{definition}
	
	\noindent The space  $L^{2}_{c,N_p}(\Rn)$ corresponds to multiples of $(p,2)-$atoms in $H^p(\Rn)$. That $T^{*}(x^{\alpha})$ is well-defined by \eqref{T*} has been stated for standard Calder\'on-Zygmund operators in \cite[p. 23]{MeyerCoifman1997}. The next proposition extends it to the type of operators we are considering in this work. 
	
	\begin{proposition} 
	\label{estimative-L1-moments}
		Let $T$ be a linear and bounded operator on $L^2(\Rn)$ whose associated kernel satisfies \eqref{inhomogeneous-kernel} and \eqref{Ls-hormander} with $s=1$. Then $x^{\alpha}Tg(x) \in L^1(\Rn)$ for all $g \in L^{2}_{c,0}(\Rn)$ provided that $|\alpha|  < \min\{ \mu, \, \delta\}$.
	\end{proposition}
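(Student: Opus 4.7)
The plan is to split $\int_{\Rn} |x^{\alpha} Tg(x)|\,dx$ into a near and a far contribution relative to $\supp(g)$, using the $L^{2}$-boundedness of $T$ on the near part, and exploiting either the cancellation $\int g = 0$ together with the $L^{1}$-Hörmander condition \eqref{Ls-hormander}, or the size estimate \eqref{inhomogeneous-kernel} on the kernel, on the far part. Since $g \in L^{2}_{c,0}(\Rn)$ we may fix a ball $B = B(x_{B}, r)$ containing $\supp(g)$, and note that $g \in L^{1}(\Rn)$ with $\|g\|_{L^{1}} \lesssim |B|^{1/2}\|g\|_{L^{2}}$ and $\int g = 0$. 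Decompose $\Rn = 2B \cup (2B)^{c}$, with $(2B)^{c} = \bigcup_{j \geq 1} A_{j}(x_{B}, r)$.

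On the near part, the factor $|x|^{|\alpha|}$ is bounded on $2B$ by a constant depending on $x_{B}$, $r$ and $\alpha$, so Cauchy--Schwarz and the $L^{2}$-boundedness of $T$ give
\begin{equation*}
\int_{2B} |x|^{|\alpha|}\, |Tg(x)|\,dx \leq (|x_{B}| + 2r)^{|\alpha|}\, |2B|^{1/2}\, \|Tg\|_{L^{2}} \lesssim \|g\|_{L^{2}} < \infty.
\end{equation*}

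For the far part, split into two cases. If $r < 1$, use the vanishing mean of $g$ to write, for $x \in (2B)^{c}$,
\begin{equation*}
Tg(x) = \int_{B} [K(x,y) - K(x,x_{B})]\, g(y)\, dy.
\end{equation*}
Since $|y - x_{B}| < r < 1$, assumption \eqref{Ls-hormander} with $s = 1$ gives $\int_{A_{j}} |K(x,y) - K(x,x_{B})|\,dx \lesssim 2^{-j\delta}$. Bounding $|x|^{|\alpha|}$ on $A_{j}$ by a constant multiple of $|x_{B}|^{|\alpha|} + (2^{j}r)^{|\alpha|}$ and applying Fubini yields
\begin{equation*}
\int_{A_{j}} |x|^{|\alpha|}\, |Tg(x)|\,dx \lesssim \bigl(|x_{B}|^{|\alpha|} + (2^{j}r)^{|\alpha|}\bigr)\, \|g\|_{L^{1}}\, 2^{-j\delta},
\end{equation*}
and the sum over $j \geq 1$ converges precisely when $|\alpha| < \delta$. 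If instead $r \geq 1$, then for $x \in (2B)^{c}$ and $y \in B$ we have $|x-y| \geq r \geq 1$, so \eqref{inhomogeneous-kernel} yields $|Tg(x)| \lesssim \|g\|_{L^{1}}\, |x - x_{B}|^{-n-\mu}$, and
\begin{equation*}
\int_{(2B)^{c}} |x|^{|\alpha|}\,|Tg(x)|\,dx \lesssim \|g\|_{L^{1}} \int_{|x - x_{B}| \geq 2r} \bigl(|x_{B}|^{|\alpha|} + |x-x_{B}|^{|\alpha|}\bigr)\, |x - x_{B}|^{-n-\mu}\,dx
\end{equation*}
is finite exactly when $|\alpha| < \mu$. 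Combining the two cases proves $x^{\alpha} Tg \in L^{1}(\Rn)$ under $|\alpha| < \min\{\mu, \delta\}$.

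The key technical point, and the reason both parameters $\mu$ and $\delta$ enter, is the control of the polynomial growth $|x|^{|\alpha|}$ on distant annuli: even the strong $2^{-j\delta}$ decay furnished by the $L^{1}$-Hörmander condition loses a factor $(2^{j}r)^{|\alpha|}$, forcing the restriction $|\alpha| < \delta$ in the small-ball regime, while the kernel-size estimate \eqref{inhomogeneous-kernel} used in the large-ball regime gives convergence only when $|\alpha| < \mu$; neither condition alone is sufficient to cover both regimes, which is what produces the hypothesis $|\alpha| < \min\{\mu,\delta\}$.
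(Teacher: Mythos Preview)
Your proof is correct and follows essentially the same approach as the paper's: the same near/far split relative to $2B$, the $L^{2}$-boundedness for the local part, and the same case distinction $r<1$ (vanishing mean plus the $L^{1}$-H\"ormander condition \eqref{Ls-hormander}) versus $r\geq 1$ (kernel size estimate \eqref{inhomogeneous-kernel}) for the far part. The only cosmetic difference is that the paper expands $(|x_{B}|+2^{j}r)^{|\alpha|}$ binomially into $\sum_{|\alpha'|\leq|\alpha|}|x_{B}|^{|\alpha|-|\alpha'|}(2^{j}r)^{|\alpha'|}$, whereas you use the equivalent bound $|x|^{|\alpha|}\lesssim |x_{B}|^{|\alpha|}+(2^{j}r)^{|\alpha|}$ directly.
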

	
	\begin{proof}
		Let $g \in L^{2}_{c,0}(\Rn)$, fix a ball  $B=B(x_B,r)$ containing the support of $f$,  and write
		$$
		\int_{\Rn}{|x^{\alpha}Tf(x)|dx} = \int_{2B}{|x^{\alpha}Tg(x)|dx}+\int_{(2B)^c}{|x^{\alpha}Tg(x)|dx}.
		$$
		From the boundedness of $T$ on $L^2(\Rn)$ we get
		\begin{align*}
			\int_{2B}{|x^{\alpha}Tf(x)|dx}  \leq  \| x^{\alpha} \|_{L^{\infty}(2B)} \ |2B|^{\frac{1}{2}} \ \| Tg \|_{L^2} 
			\lesssim   r^{|\alpha|+\frac{n}{2}}  \| g \|_{L^2} <\infty.
		\end{align*}
		Suppose now that $r \geq 1$. The estimation of the second integral follows by \eqref{inhomogeneous-kernel}:
		\begin{align*}
			\int_{(2B)^c}{|x|^{|\alpha|}|Tg(x)|dx} & \leq \sum_{j \in \N}\int_{A_j(x_B,r)} \int_{B} |K(x,y)| \, |g(y)| \, (|x_B| + 2^jr)^{|\alpha|} dydx \\
			& \lesssim \| g \|_{L^2} \, r^{\frac{n}{2}} \, \sum_{|\alpha'|\leq |\alpha|} |x_B|^{|\alpha|-|\alpha'|} \,\sum_{j \in \N}  (2^jr)^{|\alpha'|}  \int_{A_j(x_B,r)} |x-x_B|^{-n-\mu}dx \\
			& \lesssim \| g \|_{L^2} \, \sum_{|\alpha'|\leq |\alpha|} |x_B|^{|\alpha|-|\alpha'|} \, r^{|\alpha'|+\frac{n}{2}-\mu} \sum_{j \in \N} (2^j)^{|\alpha'|-\mu}<\infty
		\end{align*}
		since $|\alpha'| \leq |\alpha| < \mu$. 
		
		For $r<1$, since $g$ has vanishing integral, the bound follows by applying \eqref{Ls-hormander}:
		\begin{align*}
			\int_{(2B)^c}{|Tg(x)x^{\alpha}|dx} & \leq \sum_{j \in \N}\int_{A_j(x_B,r)} \int_{B} |K(x,y)-K(x,x_B)| \, |g(y)| \, |x|^{|\alpha|} dydx \\
			& \lesssim \| g \|_{L^2} \, \sum_{|\alpha'|\leq |\alpha|} |x_B|^{|\alpha|-|\alpha'|} \, r^{|\alpha'|+\frac{n}{2}-\delta} \sum_{j \in \N} (2^j)^{|\alpha'|-\delta} < \infty
		\end{align*}
		since $|\alpha'|  < \delta$. This completes the proof. 
	\end{proof}
	
	In \cite[Theorem 1.1]{DingHanZhu2020} the authors considered the boundedness of inhomogeneous Calder\'on-Zygmund operators on $h^p(\Rn)$ for some $\frac{n}{n+1}<p<1$ assuming the cancellation condition $T^{\ast}(1) \in \dot{\Lambda}_{\gamma_p}(\R^{n})$. In the next theorem, we extend this result for full range $0<p \leq 1$, assuming an $L^s$ integral-type condition on the kernel and a local Campanato-type cancellation condition. In particular, our result recover the one in \cite{DingHanZhu2020}  when $\frac{n}{n+1}<p<1$.  Note that when $p$ is small, the values of $\mu$ and $\delta$ must be large.
	
\begin{theorem} \label{continuity-inhomCZO}
		Let $0<p\leq 1$ and $T$ be an inhomogeneous Calder\'on-Zygmund operator associated to a kernel satisfying the integral condition \eqref{Ls-hormander} for some $1\leq s\leq 2$ with $p<s$. Then $T$ can be extended to a bounded operator from $h^p(\Rn)$ to itself provided that $\min\{\mu,\delta\} > \gamma_p$ and there exists $C>0$ such that for any ball $B\subset \Rn$ and $\alpha \in \Z_+^n$ with $|\alpha| \leq N_p$,  
\begin{equation}
 \label{localCamp}
f= T^*[(\cdot-x_B)^{\alpha}] \quad \mbox{ satisfies } \quad \left(\fint_{B}|f(y)-P^{N_p}_{B}(f)(y)|^2dy\right)^{1/2}\leq C \, \Psi_{p,\alpha}(r(B)),
\end{equation}
where $P_{B}^{N_p}(f)$ is the  polynomial of degree $\leq N_p$ that has the same moments as $f$ over $B$ up to order $N_p$, and for $\varphi_p$ is as in Definition~\ref{approximate-atom},\\
$$
\displaystyle{\Psi_{p,\alpha}(t):=\left\{ \begin{array}{ll} 
t^{\gamma_p} &\quad \text{if } |\alpha|< \gamma_p, \\
 t^{\gamma_p} \varphi_p(t) &\quad \text{if } |\alpha| = \gamma_p = N_p \in \Z.
			\end{array} \right.}
$$
	\end{theorem}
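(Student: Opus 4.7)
The overall plan is to verify that $T$ sends each Goldberg $(p,s)$-atom to a uniform constant multiple of a $(p,s,\lambda,\omega)$-molecule in the sense of Definition~\ref{hpmolecule}, and then to invoke Proposition~\ref{molecular-decomp} together with the atomic decomposition~\eqref{goldberg-decomp} (which converges in $h^p$) to extend $T$ boundedly to $h^p(\Rn)$. Choose $\lambda$ in the interval $\bigl(n(s/p-1), \min\{s\delta, s\mu\} + n(s-1)\bigr)$, which is non-empty since $\min\{\mu,\delta\} > \gamma_p$, and let $\omega$ be proportional to the Campanato constant in \eqref{localCamp}. Fix a Goldberg $(p,s)$-atom $a$ associated to $B = B(x_B, r)$.

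For (M1), since $T$ is bounded on $L^2$ and its kernel satisfies \eqref{inhomogeneous-kernel} and \eqref{Ls-hormander}, standard Calder\'on-Zygmund theory gives that $T$ extends to a bounded operator on $L^s(\Rn)$ for $1 < s \le 2$ (and the case $s=1$ is handled by combining the weak-$(1,1)$ type on $(2B)^c$ with the $L^2$ estimate on $2B$, as in the proof of Proposition~\ref{prop-psomega-atom}), so $\|Ta\|_{L^s(\Rn)} \lesssim r^{n(1/s-1/p)}$. For (M2), split $B^c$ into annuli $A_j = \{2^j r \le |x-x_B| < 2^{j+1}r\}$, $j \ge 0$. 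When $r \ge 1$, no cancellation is available, but the pointwise decay $|K(x,y)| \le C|x-y|^{-n-\mu}$ yields $|Ta(x)| \lesssim |x-x_B|^{-n-\mu}\|a\|_{L^1}$ on $A_j$, and
\[
\sum_{j\ge0} (2^j r)^{\lambda} \int_{A_j} |Ta|^s \, dx \ \lesssim \ r^{\lambda+n(1-s/p)} \sum_{j\ge 0} 2^{j(\lambda + n - s(n+\mu))},
\]
which converges by $\lambda < s\mu + n(s-1)$. When $r < 1$, the vanishing moment $\int a=0$ allows us to subtract $K(x,x_B)$, and Minkowski's inequality together with the $L^s$ Hörmander condition \eqref{Ls-hormander} gives $\|Ta\|_{L^s(A_j)} \lesssim |A_j|^{1/s-1} 2^{-j\delta}\|a\|_{L^1}$; combined with $\|a\|_{L^1} \le \|a\|_{L^s}|B|^{1-1/s}$, this yields a geometric series in $j$ summable thanks to $\lambda < s\delta + n(s-1)$, and the correct scaling $r^{\lambda + n(1-s/p)}$.

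For (M3), the large-ball case $r \ge 1$ is automatic from Remark~\ref{remark-molecules}(ii) once (M1)-(M2) are established. For $r < 1$, Proposition~\ref{estimative-L1-moments} (whose hypothesis $|\alpha| < \min\{\mu,\delta\}$ is guaranteed by $\min\{\mu,\delta\} > \gamma_p \ge N_p$) justifies the duality identity
\[
\int Ta(x)(x-x_B)^{\alpha} dx = \langle a, T^{\ast}[(\cdot-x_B)^{\alpha}]\rangle.
\]
Since $a$ has vanishing moments up to order $N_p$, we may insert the Campanato polynomial $P_B^{N_p}$, and then Cauchy-Schwarz with the bound $\|a\|_{L^2(B)} \lesssim r^{n(1/2-1/p)}$ and the assumption \eqref{localCamp} yield
\[
\left|\int Ta(x)(x-x_B)^{\alpha} dx\right| \le \|a\|_{L^2(B)} \bigl\|T^{\ast}[(\cdot-x_B)^{\alpha}] - P_B^{N_p}(T^{\ast}[(\cdot-x_B)^{\alpha}])\bigr\|_{L^2(B)} \lesssim r^{-\gamma_p}\Psi_{p,\alpha}(r),
\]
which evaluates to a constant when $|\alpha| < \gamma_p$ and to $\varphi_p(r)$ when $|\alpha| = \gamma_p = N_p \in \Z$, exactly as required by (M3).

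The main technical hurdle is (M2) in the small-ball case: only first-order cancellation ($\int a = 0$) is available on the atom and only the first-order $L^s$ Hörmander condition on the kernel, yet we need decay strong enough to absorb both the weight $|x-x_B|^{\lambda}$ and the blow-up of $\|a\|_{L^1}$ as $r\to 0$. The numerology works precisely because the exponent $\delta$ may be taken arbitrarily large (with the constraint $\delta > \gamma_p$), which in turn fixes the admissible range of $\lambda$ and justifies the choice made at the outset. A secondary point is the careful verification that the Campanato assumption \eqref{localCamp}, stated with $L^2$ averages and polynomial of degree $N_p$, exactly matches the vanishing moments of the atom, so that no Campanato-$L^q$ equivalence (which would require monotonicity of $\Psi_{p,\alpha}$) is needed.
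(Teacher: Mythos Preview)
Your overall strategy---show $Ta$ is a $(p,s,\lambda,\omega)$ molecule and invoke Proposition~\ref{molecular-decomp}---matches the paper's, and your treatment of (M2) in both the small-ball and large-ball cases is essentially identical to what the paper does. However, there is a genuine gap stemming from your choice to work with $(p,s)$-atoms rather than $(p,2)$-atoms.

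The problem appears in the (M3) verification for $r<1$. You write $\|a\|_{L^2(B)} \lesssim r^{n(1/2-1/p)}$ and pair this via Cauchy--Schwarz with the $L^2$-based Campanato bound \eqref{localCamp}. But a $(p,s)$-atom with $s<2$ need not lie in $L^2$ at all, so this norm is not available. The same difficulty arises one step earlier: the duality identity $\int Ta\,(x-x_B)^\alpha\,dx = \langle a, T^*[(\cdot-x_B)^\alpha]\rangle$ is justified through Definition~\ref{T*def} and Proposition~\ref{estimative-L1-moments}, both of which are formulated for $g\in L^2_{c,N}(\Rn)$. If $a$ is only in $L^s$ with $s<2$, the pairing is not defined. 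Trying to salvage this via H\"older with exponents $(s,s')$ would require an $L^{s'}$ Campanato condition on $T^*[(\cdot-x_B)^\alpha]$, which is strictly stronger than the $L^2$ condition in the hypothesis since $s'\ge 2$.

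The fix is simple and is exactly what the paper does: take $a$ to be a $(p,2)$-atom. Then (M1) follows from the $L^2$ boundedness of $T$ plus H\"older ($s\le 2$),
\[
\|Ta\|_{L^s(2B)} \le |2B|^{\frac 1s - \frac 12}\|Ta\|_{L^2} \lesssim r^{n(\frac 1s - \frac 12)}\|a\|_{L^2} \lesssim r^{n(\frac 1s - \frac 1p)},
\]
with no need to invoke $L^s$ boundedness of $T$ (your aside about the $s=1$ case via weak-$(1,1)$ is then unnecessary, and in fact does not work as stated since weak-$(1,1)$ does not yield an $L^1$ bound). The (M2) argument goes through unchanged since it only uses $\|a\|_{L^1}\le \|a\|_{L^2}|B|^{1/2}$. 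And for (M3) you now legitimately have $a\in L^2_{c,N_p}(\Rn)$, so the duality and Cauchy--Schwarz steps are valid. Your closing remark that ``$\delta$ may be taken arbitrarily large'' is also misleading: $\delta$ is fixed by the kernel, and what makes the numerology work is precisely the hypothesis $\min\{\mu,\delta\}>\gamma_p$, which guarantees the interval for $\lambda$ is non-empty.
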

	
	\begin{remark}
	\textnormal{Note that $f = T^*[(\cdot-x_B)^{\alpha}]$ is required to satisfy the Campanato-type condition \eqref{localCamp} only for the ball $B$ with respect to which it is defined, but the condition makes sense for any ball $B'$. This is because the  hypotheses on $T$ together with Proposition~\ref{estimative-L1-moments} imply that  for every ball $B'$,  \eqref{T*} defines $f$ as an element of the dual space of $L^2_{N_p}(B')$ (the  functions in $L^2(B')$ with vanishing moments up to order $N_p$), which can be identified with the quotient space of $L^2(B')$ by the polynomials of degree up to $N_p$.   Thus one can replace \eqref{localCamp}  by the stronger condition that  $f \in \dot{\Lambda}_{\gamma_{p}}(\Rn)$ if $|\alpha|<\gamma_p$ and $f \in L^{2,\Psi_p}_{N_{p}}(\Rn)$ 
	 if $\gamma_p \in \Z$ and $|\alpha|=\gamma_p$.}
	\end{remark}
	
	\begin{proof}
		Let $a$ be a $(p, 2)$ atom in the sense of Definition \ref{goldberg-atom}, supported in $B:=B(x_B,r)$. We will show that $Ta$ is a $(p, s, \lambda,\omega)$ molecule for $\lambda$ satisfying
		$\gamma_p < \frac \lambda s - \frac n{s'} < \min\{\mu,\delta\}$.  By Proposition~\ref{molecular-decomp} and the $h^p$ analogue of the results of \cite{MedaSjogrenVallarino2008}, this suffices to show the boundedness of $T$ on $h^p$.

		 As  $1\leq s \leq 2$, condition (M1) follows from $L^2$-continuity of $T$:
		\begin{equation} \label{estimate-M1-Ta}
			\int_{2B}{|Ta(x)|^sdx} \lesssim |B|^{1-\frac{s}{2}} \|Ta\|_{L^2}^{s} \lesssim |B|^{1-\frac{s}{2}} \|a\|_{L^2}^{s} \lesssim r(B)^{n(1-\frac{s}{p})}.
		\end{equation}
		For (M2), suppose first $r\geq 1$. From condition \eqref{inhomogeneous-kernel} it follows that for $|x-x_B|>2r$ we have $ |K(x,y)|  \lesssim |x-x_B|^{-n-\mu}$ for all $y \in B$ and therefore
		\begin{align*}
			|Ta(x)| \leq \int_{B} |K(x,y)| \, |a(y)| dy \lesssim \| a \|_{L^2} |B|^{1/2} \int_{B} |x-x_B|^{-n-\mu} \lesssim r^{-\gamma_p} |x-x_B|^{-n-\mu}. 
		\end{align*}
		Then, for $\lambda$ satisfying $\frac \lambda s - \frac n{s'} < \mu$, which means $\lambda - s(n+ \mu) < -n$, we have
		\begin{align*}
			\int_{(2B)^c}{|Ta(x)|^s \, |x-x_B|^{\lambda}dx} \, &\lesssim r^{-s\gamma_p} \int_{(2B)^c} |x-x_B|^{\lambda-s(n+\mu)} dx 
			\lesssim r^{\, \lambda+ n \left(1-\frac{s}{p}\right)} \, r^{-s\mu} 
			\lesssim r^{\, \lambda+ n \left(1-\frac{s}{p}\right)}.
		\end{align*}
		 Condition (M3) follows from Remark \ref{remark-molecules} item (ii).
		
		 Suppose now $r<1$. Using the vanishing moment condition of the atom and  \eqref{Ls-hormander} we can apply Minkowski inequality for integrals to get
		\begin{align*}
			\int_{(2B)^c}&|Ta(x)|^{s}|x-x_B|^{\lambda}dx = \sum_{j=0}^{\infty}{\int_{A_j(x_B,r)}{\left| \int_{B}{[K(x,y)-K(x,x_B)]a(y)dy} \right|^{s} \, |x-x_B|^{\lambda}dx}} \\
			& = \sum_{j=0}^{\infty}{\left\{ \left[ \int_{A_j(x_B,r)}{ \left( \int_{B}{|K(x,y)-K(x,x_B)|\,\, |a(y)| \,\, |x-x_B|^{\frac{\lambda}{s}}dy} \right)^{s}  dx}   \right]^{\frac{1}{s}} \right\}^{s}} \nonumber \\
			& \leq  \sum_{j=0}^{\infty} (2^{j+1}r)^{\lambda}{ \left\{ \int_{B} |a(y)|{ \left[ \int_{A_j(x_B,r)}{|K(x,y)-K(x,x_B)|^{s}\,\,dx} \right]^{\frac{1}{s}}dy } \right\}^{s}} \nonumber \\
			& \lesssim  \sum_{j=0}^{\infty}{(2^jr)^{\lambda} \,\, (2^jr)^{-n(s-1)} \,\, 2^{-j s\delta}} \,\,  \left( \int_{B}{|a(y)|dy} \right)^{s} \nonumber \\
			& \leq  \sum_{j=0}^{\infty}{(2^jr)^{\lambda} \,\, (2^jr)^{-n(s-1)} \,\, 2^{-j s\delta}} \,\, r^{-s\gamma_p} \nonumber \\
			& = C \ r^{\lambda + n\left( 1-\frac{s}{p} \right)} \sum_{j=0}^{\infty}{2^{j\left[\lambda-n(s-1)-s\delta \right]}} \nonumber \\
			& =  C \ r^{\lambda + n\left( 1-\frac{s}{p} \right)} \nonumber 
		\end{align*}
		assuming $\lambda < n(s-1)+s\delta$, which is the same as $\frac \lambda s - \frac n{s'} < \delta$.
		Finally, in order to verify that (M3) holds, note that for $r<1$ the function $a$ is in particular a $(p,2)$ atom in $H^p(\Rn)$ with full cancellation condition. 
		From condition \eqref{localCamp}, setting $f= T^*[(\cdot-x_B)^{\alpha}]$, we have, by \eqref{T*},
		\begin{align*}
			\left| \int{Ta(x)(x-x_B)^{\alpha}dx} \right| &= \left| \langle f, a \rangle \right|  \leq  \left(\int_{B}|f(y)-P^{N_p}_{B}(f)(y)|^2dy\right)^{1/2} \|a\|_{L^2(B)}\\
			&\lesssim \Psi_p(r)|B|^{1/2} \|a\|_{L^2(B)}\\
			&\lesssim \Psi_p(r)r^{-\gamma_p}
		\end{align*}
	which is bounded by $C_{n,p}$ if $|\alpha|< \gamma_p$ and by $\varphi_p(r(B))$ if  $|\alpha|=\gamma_p$ from  $\eqref{localCamp}$.
	\end{proof}

	As corollary, we recover part of \cite[Theorem 1.1]{DingHanZhu2020}.
	
	\begin{corollary}
		Let T be an inhomogeneous Calder\'on-Zygmund operator satisfying the pointwise controls  \eqref{inhomogeneous-kernel} and \eqref{holder-condition}. If $T^{\ast}(1) \in \dot{\Lambda}_{\gamma_p}(\Rn)$, then $T$ is a bounded operator from $h^p(\Rn)$ to itself for $n/(n+\min\{\delta, \, \mu \}) < p < 1$.
	\end{corollary}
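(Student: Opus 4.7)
The plan is to derive the corollary directly from Theorem~\ref{continuity-inhomCZO}, choosing $s=2$ and verifying the three hypotheses: (a) the integral condition \eqref{Ls-hormander}; (b) the smoothness requirement $\min\{\mu,\delta\}>\gamma_p$; and (c) the local Campanato-type cancellation \eqref{localCamp} on $T^{*}[(\cdot-x_B)^\alpha]$ for $|\alpha|\le N_p$.

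For (a), I would show that pointwise H\"older control of the kernel implies the integral estimate for every $s\ge 1$. Fix $z$, $r$ and $y$ with $|y-z|<r$; then for $x\in A_j(z,r)$, $j\ge 0$, the inequality $|x-z|\ge 2^j r\ge 2|y-z|$ holds, so \eqref{holder-condition} applies and yields $|K(x,y)-K(x,z)|\lesssim |y-z|^\delta |x-z|^{-n-\delta}\lesssim r^\delta (2^j r)^{-n-\delta}$, with the same estimate for the transposed difference. Taking the $L^s(A_j(z,r))$ norm and using $|A_j(z,r)|\approx (2^j r)^n$ gives
\[
\Bigl(\int_{A_j(z,r)}|K(x,y)-K(x,z)|^s dx\Bigr)^{1/s}\lesssim r^\delta (2^j r)^{-n-\delta}(2^j r)^{n/s} = |A_j(z,r)|^{1/s-1}\,2^{-j\delta},
\]
which is precisely \eqref{Ls-hormander}.

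Condition (b) is immediate: the hypothesis $p>n/(n+\min\{\mu,\delta\})$ rearranges to $\min\{\mu,\delta\}>n(1/p-1)=\gamma_p$, so we may certainly choose $\lambda$ with $\gamma_p<\lambda/s-n/s'<\min\{\mu,\delta\}$ (for $s=2$), as used in the proof of Theorem~\ref{continuity-inhomCZO}. For (c), observe that since $\delta\le 1$, the stated range forces $p>n/(n+1)$, hence $\gamma_p\in(0,1)$ and $N_p=0$. Consequently the only multi-index to test in \eqref{localCamp} is $\alpha=0$, and $T^{*}[(\cdot-x_B)^0]=T^{*}(1)$, whose existence as a distribution modulo constants is guaranteed by Proposition~\ref{estimative-L1-moments} (with the hypothesis $T^*(1)\in \dot{\Lambda}_{\gamma_p}(\mathbb R^n)$ interpreted in the usual distributional sense).

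The only thing left to verify --- and the mildest obstacle --- is the Campanato estimate itself. Setting $f=T^{*}(1)$ and $P^0_B f = f_B$, I use that $f\in\dot{\Lambda}_{\gamma_p}(\mathbb R^n)$ gives $|f(x)-f(y)|\le |f|_{\gamma_p}|x-y|^{\gamma_p}\le |f|_{\gamma_p}(2r(B))^{\gamma_p}$ for $x,y\in B$, so
\[
\Bigl(\fint_B |f(y)-f_B|^2 dy\Bigr)^{1/2}\le \sup_{x,y\in B}|f(x)-f(y)|\lesssim r(B)^{\gamma_p}=\Psi_{p,0}(r(B)).
\]
This establishes \eqref{localCamp} for $|\alpha|<\gamma_p$ (indeed, the only case that arises), and Theorem~\ref{continuity-inhomCZO} then yields the desired boundedness $T\colon h^p(\mathbb R^n)\to h^p(\mathbb R^n)$.
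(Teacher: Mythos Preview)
Your proof is correct and follows exactly the route the paper intends: the corollary is stated there without proof as an immediate consequence of Theorem~\ref{continuity-inhomCZO}, and you have filled in precisely the verifications needed (the paper itself remarks both that \eqref{holder-condition} implies \eqref{Ls-hormander} and, in the Remark following the theorem, that $T^*(1)\in\dot{\Lambda}_{\gamma_p}$ suffices for \eqref{localCamp} when $|\alpha|<\gamma_p$). One small quibble: in part~(a) the inequality $2^j r\ge 2|y-z|$ can fail for $j=0$ when $|y-z|\in(r/2,r)$, but since the proof of Theorem~\ref{continuity-inhomCZO} only invokes \eqref{Ls-hormander} on $(2B)^c$, i.e.\ for $j\ge 1$, this causes no trouble.
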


	\subsection{Strongly singular inhomogeneous Calder\'on-Zygmund operators} \label{subsection-ISSCZO}
	
	The study of strongly singular Calder\'on-Zygmund operators was first motivated by some classes of multiplier operators whose symbol is given by $e^{i|\xi|^{\sigma}}/|\xi|^{\beta}$ away from the origin, for some parameters $\sigma$ and $\beta$. It is well know that these operators satisfy some $L^p$-estimates for a restricted range of $p$ (see \cite{Fefferman1970,Hirschman1959,Wainger1965}). To study some endpoint $L^p$-estimates, C. Fefferman in \cite{Fefferman1970} enlarged this class of multipliers into a class of convolution operators named weakly strongly singular integrals. 
	 In  \cite{AlvarezMilman}, \'Alvarez and Milman defined the non-convolution version of these operators and showed it has connection to more general classes of pseudodifferential operators in the H\"ormander class. In this section we present an inhomogeneous version of the strongly singular Calder\'on-Zygmund operators and we study a continuity result on $h^{p}(\R^{n})$.
	
	We say that a linear and bounded operator $T: \mathcal{S}(\Rn) \rightarrow\mathcal{S}'(\Rn)$ is an \textit{inhomogeneous strongly singular Calder\'on-Zygmund operator} if its distributional kernel is given by a continuous function $K(x,y)$ on $\R^{2n}$ away the diagonal and   satisfies, for some $\mu>0$
	\begin{equation} \label{size-kernel-strong}
		|K(x,y)| \leq C \ \min{\left\{ \frac{1}{|x-y|^n}, \ \frac{1}{|x-y|^{n+\mu}} \right\}} \quad \text{for}\,\, x \neq y,
	\end{equation}
	and for some $0<\delta\leq 1$ and $0<\sigma\leq 1$,
	\begin{equation} \label{holder-kernel-strong}
		|K(x,y)-K(x,z)|+|K(y,x)-K(z,x)| \leq C \, \frac{|y-z|^{\delta}}{|x-z|^{n+\frac{\delta}{\sigma}}}
	\end{equation}
	if $|x-z|>2|y-z|^{\sigma}$. In addition, we also assume that $T$ has the following boundedness properties:
	\begin{enumerate}
		\item[(i)] $T$ can be extended to a bounded operator from $L^2(\Rn)$ to itself;
		\item[(ii)] For some $\beta \in  \left[\left.(1-\sigma)\frac{n}{2}, \frac{n}{2}\right)\right.$, $T$ can be extended to a bounded operator from $L^q(\Rn)$ to $L^2(\Rn)$, where $\displaystyle{\frac{1}{q} = \frac{1}{2}+\frac{\beta}{n}}$.
	\end{enumerate}
	
	\begin{example}
		\textnormal{It follows from \cite[Theorem 1.1 (a) and (e)]{AlvarezHounie} that $T \in OpS^{-n(1-\sigma)}_{\sigma, b}(\Rn)$ for $0<b \leq \sigma <1$ satisfies the pointwise condition \eqref{holder-kernel-strong} for $\delta=1$ and \eqref{size-kernel-strong} for any $\mu>0$ sufficiently large. In addition, $T$ satisfies the boundedness conditions (i) and (ii) above (see \cite[Theorem 3.5]{AlvarezHounie}).}
	\end{example}
	
	In the same spirit of the $L^s$ integral-type condition \eqref{Ls-hormander}, we assume the weaker kernel condition
	\begin{align} \label{1-hormander-2a}
		&\left( \int_{A_j(z,r^{\rho})}{|K(x,y)-K(x,z)|^s+|K(y,x)-K(z,x)|^sdx} \right)^{\frac{1}{s}} \lesssim |A_j(z,r^{\rho})|^{\frac{1}{s}-1+\frac{\delta}{n} \left(\frac{1}{\rho}-\frac{1}{\sigma}  \right)}  2^{-\frac{j\delta}{ \rho}}
	\end{align}
	for $0<r<1$, $\delta>0$ where  $A_j(z,\tilde{r})=\{ x \in \Rn: \ 2^j \tilde{r} \leq |x-z| < 2^{j+1}\tilde{r} \}$, $0< \rho \leq \sigma \leq 1$, $z \in \Rn$ and $|y-z|<r$. If $\rho \leq \sigma$ then \eqref{holder-kernel-strong} implies \eqref{1-hormander-2a}. Conditions of this type were considered in \cite[inequalities (3.7) and (3.8) p. 412]{AlvarezMilmanVectorValued} and {\cite{VasconcelosPicon}} and are  naturally related to pseudodifferential operators, as pointed out in the next example.
	
	\begin{example}
		\textnormal{From \cite[Theorem 5.1]{AlvarezHounie} it follows that the associated kernel of operator in the H\"ormander class $OpS^{m}_{\sigma, b}(\Rn)$ for $0<b \leq \sigma <1$ satisfies the estimate \eqref{1-hormander-2a} with $s=1$ for $m \leq -\frac{n}{2}(1-\sigma)$. The generalization for $1<s\leq 2$ follows from \cite{VasconcelosPicon}.}
	\end{example}

	\begin{theorem}
		Let $0<p\leq 1$ and $T$ an inhomogeneous strongly singular Calder\'on-Zygmund operator whose kernel satisfies the integral condition \eqref{1-hormander-2a} for some $\delta>0$ and $1\leq s\leq 2$ with $p<s$. Then $T$ can be extended to a bounded operator from $h^p(\Rn)$ to itself provided that 
		$$
		\max \left\{ \frac{n}{n+\mu}, \, p_0  \right\} < p \leq 1 \ \ \text{where} \ \ \frac{1}{p_0} := \frac{1}{2}+\dfrac{\beta \left( \frac{\delta}{\sigma}+\frac{n}{2} \right)}{n \left( \frac{\delta}{\sigma}-\delta+\beta \right)}
		$$
		and the cancellation condition  \eqref{localCamp} holds.
	\end{theorem}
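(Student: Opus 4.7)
The plan is to extend the molecular argument of Theorem \ref{continuity-inhomCZO} to the strongly singular setting. Given a $(p,s)$ atom $a$ supported on $B=B(x_B,r)$, we will show that $Ta$ is a uniform constant multiple of a $(p,s,\lambda,\omega)$ molecule in the sense of Definition \ref{hpmolecule}. The crucial new ingredient is that in the small-ball case $r<1$, the molecule is associated not to $B$ itself but to the \emph{enlarged} ball $B^{*}=B(x_B,r^{\rho})$ for a suitable $\rho\in(0,\sigma]$. This enlargement is dictated both by the scale at which \eqref{1-hormander-2a} is available and by the natural scale at which to exploit the $L^{q}\to L^{2}$ hypothesis (ii). By Proposition \ref{molecular-decomp} together with the $h^{p}$-version of the atom-testing criterion, this uniform bound will suffice.

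For a large ball ($r\ge 1$) the argument is essentially identical to its counterpart in Theorem \ref{continuity-inhomCZO}, relying only on the pointwise decay \eqref{size-kernel-strong}; the constraint $p>n/(n+\mu)$ permits choosing $\lambda>n(s/p-1)$ with $\lambda-s(n+\mu)<-n$ so that the tail integral in (M2) converges. For $r<1$, the three molecular conditions at scale $r^{\rho}$ are verified as follows. For \textbf{(M1)} on $2B^{*}$, H\"older's inequality, the $L^{q}\to L^{2}$ continuity of $T$, and $\|a\|_{L^{q}}\lesssim r^{n(1/q-1/p)}$ give
\begin{equation*}
\|Ta\|_{L^{s}(2B^{*})}\le |2B^{*}|^{\,1/s-1/2}\|Ta\|_{L^{2}}\lesssim r^{\,\rho n(1/s-1/2)+n(1/q-1/p)},
\end{equation*}
and requiring this to be controlled by the target $r^{\rho n(1/s-1/p)}$ couples the parameters $\rho$, $p$, and $\beta=n(1/q-1/2)$. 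For \textbf{(M2)} on $(2B^{*})^{c}$, note that when $\rho\le\sigma$ and $r<1$, for any $y\in B$ and $|x-x_B|\ge 2r^{\rho}$ we have $|x-x_B|\ge 2r^{\sigma}\ge 2|y-x_B|^{\sigma}$, so \eqref{1-hormander-2a} is applicable. Using $\int a=0$, decomposing into annuli $A_{j}(x_{B},r^{\rho})$, and applying Minkowski's integral inequality exactly as in Theorem \ref{continuity-inhomCZO}, one produces the factor $2^{-js\delta/\rho}$ from \eqref{1-hormander-2a}, and the resulting geometric series converges provided $\lambda/s-n/s'<\delta/\rho$. Finally, \textbf{(M3)} is essentially identical to its counterpart in Theorem \ref{continuity-inhomCZO}: pair $Ta$ with $(x-x_B)^{\alpha}$ via \eqref{T*}, invoke the Campanato hypothesis \eqref{localCamp} on $T^{*}[(\cdot-x_B)^{\alpha}]$, and conclude by H\"older against the $L^{2}$-size of the atom.

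The main obstacle will be a parameter-compatibility problem: one needs $\rho\in(0,\sigma]$ and $\lambda>n(s/p-1)$ such that the (M1) inequality coupling $\rho,p,\beta$ and the (M2) constraint $\lambda/s-n/s'<\delta/\rho$ can be satisfied simultaneously. Eliminating $\rho$ from these two conditions is exactly what yields the lower bound $p>p_0$ in the statement, with the somewhat intricate formula for $p_0$ encoding the balance between the $L^{q}\to L^{2}$ gain at the enlarged scale $r^{\rho}$ and the decay rate $\delta/\rho$ in the integral H\"ormander-type condition \eqref{1-hormander-2a}. Once an admissible pair $(\rho,\lambda)$ is exhibited, the remainder of the proof follows the template of Theorem \ref{continuity-inhomCZO}.
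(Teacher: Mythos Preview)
Your overall strategy---reduce to a single atom, show $Ta$ is a uniform $(p,s,\lambda,\omega)$ molecule, and appeal to Proposition~\ref{molecular-decomp}---matches the paper's.  The large-ball case and the verification of (M3) via \eqref{localCamp} are handled correctly.  However, the small-ball argument diverges from the paper's in one structural point and contains a genuine gap.

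\textbf{Difference in structure.}  The paper does \emph{not} associate the molecule to the enlarged ball $B^{*}=B(x_B,r^{\rho})$.  It keeps the molecule at the original ball $B$: condition (M1) on $B$ follows directly from the $L^{q}\!\to\!L^{2}$ bound and $r<1$ (so $r^{s(1/q-1/2)}\le 1$), with no constraint linking $\rho$ and $p$.  The scale $r^{\rho}$ enters only as a splitting point inside the (M2) integral, $\int_{\Rn}|Ta|^{s}|x-x_B|^{\lambda}=\int_{2B^{\rho}}+\int_{(2B^{\rho})^{c}}$, both pieces being compared to the \emph{same} target $r^{\lambda+n(1-s/p)}$.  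The inner piece gives the constraint $\lambda\le -n(1-s/2)+\frac{ns}{1-\rho}(1/q-1/2)$; the outer piece, via \eqref{1-hormander-2a}, forces a specific value of $\rho$ (equating the $r$-exponents), and substituting this $\rho$ back yields the upper bound on $\lambda$ that translates into $p>p_0$.

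\textbf{Gap in your (M2) analysis.}  Two issues.  First, you read off from \eqref{1-hormander-2a} only the factor $2^{-j\delta/\rho}$ and state the series converges when $\lambda/s-n/s'<\delta/\rho$.  But \eqref{1-hormander-2a} carries the extra factor $|A_j|^{\,(\delta/n)(1/\rho-1/\sigma)}$; once included, the power of $2^{j}$ becomes $\lambda-n(s-1)-s\delta/\sigma$, so the correct convergence condition is $\lambda/s-n/s'<\delta/\sigma$, \emph{independent of} $\rho$.  Second, and more seriously, you never verify that the resulting power of $r$ matches the molecular target at scale $r^{\rho}$, namely $r^{\rho(\lambda+n(1-s/p))}$.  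This matching is where the genuine $\rho$-dependence lives: it gives a constraint of the form $\gamma_p(1-\rho)\le \delta(1-\rho/\sigma)$, and it is the interplay between \emph{this} and your (M1) constraint $1-\rho\le \beta/(\gamma_p+n/2)$ that, after elimination, produces the threshold $p_0$.  As written, your claim that ``eliminating $\rho$ from these two conditions yields $p>p_0$'' rests on the wrong pair of inequalities.

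A minor point: you start from a $(p,s)$ atom and then invoke $\|a\|_{L^q}\lesssim r^{n(1/q-1/p)}$.  Since $1/q=1/2+\beta/n$, one may have $q>s$ when $s<2$, in which case this bound is not available from the $L^{s}$ size condition alone.  The paper avoids this by working with $(p,2)$ atoms throughout.
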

	
	\begin{proof}
		Let $a$ be a $(p, 2)$ atom in the sense of Definition \ref{goldberg-atom}, supported in $B:=B(x_B,r)$. We will show that $Ta$ is a $(p, s, \lambda,\omega)$ molecule for $\lambda$ satisfying
		$$
		\gamma_p < \frac \lambda s - \frac n{s'} < \min \left\{\mu,\gamma_{p_0}\right\}, \quad \gamma_{p_0}:= n\left(\frac 1{p_0} - 1\right) = -\frac{n}{2}+ \dfrac{\beta \left( \frac{n}{2}+\frac{\delta}{\sigma} \right)}{\beta+\frac{\delta}{\sigma}+\delta}.
		$$
		If $r\geq 1$, conditions (M1) and (M2) will follow by the same arguments presented in the proof of Theorem \ref{continuity-inhomCZO}, provided $\frac \lambda s - \frac n{s'} < \mu$. 
		
		Suppose now that $r<1$.  Analogously to \cite[Lemma 2.1]{AlvarezMilman}, we will actually show some better estimates on $Ta$. In fact, since $1\leq s\leq 2$, from the stronger continuity $L^{q}-L^{2}$ assumption it follows
		$$
		\int_{B} |Ta(x)|^sdx \leq |B|^{1-\frac{s}{2}} \, \| Ta \|_{L^2}^{s} \lesssim |B|^{1-\frac{s}{2}} \| a \|_{L^q}^{s} \lesssim |B|^{1+\frac{s}{q}-s} \| a \|_{L^2}^{s} \lesssim |B|^{1-\frac{s}{p}+s\left(\frac{1}{q}-\frac{1}{2}  \right)} 
		$$
		and so (M1) holds since $1/q-1/2 \geq 0$. To show (M2), consider $0<\rho \leq \sigma \leq 1$, where $\rho$ is a parameter that will be chosen conveniently later. Denote by $2B^{\rho}:= B(x_{B},2r^{\rho})$ and split
		\begin{align*}
			\int_{\Rn} |Ta(x)|^s \, |x-x_B|^{\lambda}dx = \int_{2B^{\rho}} |Ta(x)|^s \, |x-x_B|^{\lambda}dx+\int_{(2B^{\rho})^{c}} |Ta(x)|^s \, |x-x_B|^{\lambda}dx := I_1+I_2.
		\end{align*}
		To estimate $I_1$, we use the $L^{q}-L^{2}$ continuity again and get
		\begin{align*}
			\int_{2B^{\rho}} |Ta(x)|^s \, |x-x_B|^{\lambda}dx \lesssim r^{\lambda \rho} |B^{\rho}|^{1-\frac{s}{2}} \| Ta \|_{L^2}^{s} \lesssim r^{\rho \lambda +n \left(\rho-\frac{s \rho}{2} \right)} \| a \|_{L^q}^{s} \lesssim r^{\rho \lambda +n \left[\rho-\frac{s\rho}{2}+s \left( \frac{1}{q}-\frac{1}{p} \right) \right]} \lesssim r^{\lambda+n\left(1-\frac{s}{p} \right)}
		\end{align*}
		assuming
		\begin{equation} \label{upper-bound-lambda}
			\lambda \leq -n \left( 1-\frac{s}{2} \right)+\frac{ns}{1-\rho}\left( \frac{1}{q}-\frac{1}{2} \right).
		\end{equation}
		Note that this control would not be possible using only the $L^2$-boundedness. For $I_2$, we use \eqref{1-hormander-2a} and then   
		\begin{align} \label{choose-rho}
			\int_{(2B^{\rho})^c}{|Ta(x)|^{s}|x-x_B|^{\lambda}dx} & \leq \sum_{j=0}^{\infty}{ (2^jr^{\rho})^{\lambda} \left\{ \int_{B}{|a(y)| \left[ \int_{A_j(x_B,r^{\rho})}{|K(x,y)-K(x,x_B)|^{s} dx} \right]^{\frac{1}{s}} dy} \right\}^{s}} \nonumber \\
			& \lesssim \ \sum_{j=0}^{\infty}{(2^jr^{\rho})^{\lambda} \,\, \left(|A_j(x_B,r^{\rho})|^{{\frac{1}{s}-1+\frac{\delta}{n} \left(\frac{1}{\rho}-\frac{1}{\sigma} \right)}} \,\, 2^{-\frac{j\delta}{\rho}} \right)^{s} } \,\, \| a \|_{L^2}^{s} \,\, |B|^{\frac{s}{2}} \nonumber \\
			& \lesssim \ \sum_{j=0}^{\infty}{(2^jr^{\rho})^{\lambda} \,\, \left(|A_j(x_B,r^{\rho})|^{{\frac{1}{s}-1+\frac{\delta}{n} \left(\frac{1}{\rho}-\frac{1}{\sigma} \right)}} \,\, 2^{-\frac{j\delta}{\rho}} \right)^{s} \,\, r^{sn\left( 1-\frac{1}{p} \right) }} \nonumber \\
			& = C \  r^{\rho\lambda + n \left[ s+\frac{s\delta}{n}-s\rho \left(1-\frac{1}{s}+\frac{\delta}{n\sigma}\right) -\frac{s}{p} \right]} \sum_{j=0}^{\infty}{2^{j\left[ \lambda-n(s-1)-\frac{s\delta}{\sigma} \right]}} \\
			& \lesssim \ r^{\,\rho\lambda + n \left[ \rho \left(1-\frac{s}{2} \right)+s \left(\frac{1}{q}-\frac{1}{p}\right) \right]} \lesssim r^{\lambda+n\left(1-\frac{s}{p}\right)} \nonumber
		\end{align}			  
		in which we choose $\rho$ to be such that 
		$$ 
		s+\frac{s\delta}{n}-\rho \left(s-1+\frac{s\delta}{n\sigma}\right)= \rho \left( 1-\frac{s}{2} \right)+ \frac{s}{q}, \ \mbox{ i.e. } \ \rho := \frac{n \left( 1-\frac{1}{q}\right)+\delta}{\frac{n}{2}+\frac{\delta}{\sigma}}.
		$$ 
		The convergence of the series in \eqref{choose-rho} follows from \eqref{upper-bound-lambda}, since  by the choice of $\rho$ we have
		$$
		-n \left( 1-\frac{s}{2} \right)+\frac{ns}{1-\rho}\left( \frac{1}{q}-\frac{1}{2} \right) < n(s-1)+s\delta < n(s-1)+\frac{s\delta}{\sigma}.
		$$
		In particular, the restriction on $\lambda$ for this particular choice of $\rho$ is
		$$
		\lambda \leq -n \left( 1-\frac{s}{2} \right) + \dfrac{s\beta \left( \frac{n}{2}+\frac{\delta}{\sigma} \right)}{\beta+\frac{\delta}{\sigma}+\delta}.
		$$
		For the validity of (M3) we proceed in the same way as in the proof of Theorem \ref{continuity-inhomCZO}. Therefore, $Ta$ is a $(p,s,\lambda,\omega)$ molecule provided that $ \displaystyle \max \left\{ \frac{n}{n+\mu}, \, p_{0}  \right\}<p\leq 1$.
	\end{proof}


\begin{thebibliography}{999}
	
	\bibitem{AlvarezHounie} J. Álvarez and J. Hounie, \textit{Estimates for the kernel and continuity properties of pseudo-differential operators}, Arkiv f\"or Matematik \textbf{28} (1990), no. 1, 1--22.
	
	\bibitem{AlvarezMilman} J. Álvarez and M. Milman, \textit{$H^p$ continuity properties of Calder\'on-Zygmund-type operators}, Journal of Mathematical Analysis and Applications \textbf{118} (1986), no. 1, 63--79.
	
	\bibitem{AlvarezMilmanVectorValued} J. Álvarez and M. Milman, \textit{Vector valued inequalities for strongly singular Calder\'on-Zygmund operators}, Revista Matem\'atica Iberoamericana \textbf{2} (1986), 405--426.
	
	\bibitem{Bownik2005} M. Bownik, \textit{Boundedness of operators on Hardy spaces via atomic decompositions}, Proceedings of the American Mathematical Society \textbf{113} (2005), no. 12, 3535--3542.
	
	\bibitem{Coifman1974} R. Coifman, \textit{Characterization of Fourier transforms of Hardy spaces}, Proceedings of the National Academy of Sciences of the United States of America \textbf{71} (1974), 4133--4134.
	
	\bibitem{Coifman-Weiss1977} R. Coifman and G. Weiss, \textit{Extensions of Hardy spaces and their use in analysis}, Bulletin of the American Mathematical Society \textbf{83} (1977), no. 4, 589--645.
	
	\bibitem{GaliaThesis} G. Dafni, \textit{Hardy spaces on strongly pseudoconvex domains in $C^n$ and domains of finite type in $C^2$}, Ph.D. thesis, Princeton University, 1993.
	
	\bibitem{GaliaLiflyand} G. Dafni and E. Liflyand, \textit{A local Hilbert transform, Hardy's inequality and molecular characterization of Goldberg's local Hardy space},
	Complex Analysis and its Synergies \textbf{5} (2019), no. 10. 
	
	\bibitem{GaliaYue} G. Dafni and H. Yue, \textit{Some characterizations of local $bmo$ and $h^1$ on metric measure spaces}, Analysis and Mathematical Physics \textbf{2} (2012), 285--318.
	
	\bibitem{DingHanZhu2020} W. Ding, Y. Han, and Y. Zhu, \textit{Boundedness of singular integral operators on local Hardy spaces and dual spaces}, Potential Analysis (2020).
	
	\bibitem{Fefferman1970} C. Fefferman, \textit{Inequalities for strongly singular convolution operators}, Acta Mathematica \textbf{124} (1970), no. 1, 9--36. 
	
	\bibitem{GarciaFranciaWeighted} J. Garc\'ia-Cuerva and J. de Francia, \textit{Weighted norm inequalities and related topics}, Annals of Discrete Mathematics, no. 116, NorthHolland, 1985. 
	
	\bibitem{Goldberg2} D. Goldberg, \textit{Local Hardy Spaces}, Harmonic Analysis in Euclidean Spaces, Proceedings of the Symposium on Pure Mathematics, vol. XXXV, American Mathematical Society, 1979, 245--248.
	
	\bibitem{Goldberg1979} D. Goldberg, \textit{A local version of real Hardy spaces}, Duke Mathematical Journal \textbf{46} (1979), no. 1, 27--42.
	
	\bibitem{Hirschman1959} I. Hirschman, \textit{On multiplier transformations}, Duke Mathematical Journal \textbf{26} (1959), no. 2, 221--242. 
	
	\bibitem{PiconKappHoepfner} G. Hoepfner, R. Kapp, and T. Picon, \textit{On the continuity and compactness of pseudodifferential operators on localizable Hardy spaces}, Potential Analysis  \textbf{55}, 491-512 (2021)
	
	\bibitem{Komori2001} Y. Komori, \textit{Calder\'on-Zygmund operators on $H^p(\mathbb{R}^n)$}, Scientiae Mathematicae Japonicae Online \textbf{4} (2001), 35--44. 
	
	\bibitem{MR2379517} W. Li, \textit{John-Nirenberg type inequalities for the Morrey-Campanato spaces}, Journal of Inequalities and Applications \textbf{2008} (2007), 1-5 .
	
	\bibitem{Naibo} F. K. Ly and V. Naibo, \textit{Pseudo-multipliers and smooth molecules on Hermite Besov and Hermite Triebel-Lizorkin spaces}, Journal of Fourier Analysis and Applications \textbf{27} (2021), no. 57. 
	
	\bibitem{MedaSjogrenVallarino2008} S. Meda, P. Sj\"ogren and M. Vallarino, \textit{On the $H^1-L^1$ boundedness of operators}, Proceedings of the American Mathematical Society \textbf{136} (2008), no. 8, 2921--2931.
	
	\bibitem{MeyerCoifman1997} Y. Meyer and R. Coifman, \textit{Wavelets: Calder\'on-Zygmund and multilinear operators}, Cambridge Studies in Advanced Mathematics, Cambridge University Press, 1997.
	
	\bibitem{RafeiroCampanato} H. Rafeiro, N. Samko, and S. Samko, \textit{Morrey-Campanato spaces: an overview}, Operator Theory: Advances and applications \textbf{228} (2013), 293--323. 
	
	\bibitem{SteinHarmonic} E. Stein, \textit{Harmonic Analysis: real-variable methods, orthogonality, and oscillatory integrals}, Monographs in Harmonic Analysis, Princeton	University Press, 1993. 
	
	\bibitem{Taibleson-Weiss} M. Taibleson and G. Weiss, \textit{The molecular characterization of certain Hardy spaces}, Ast\'erisque, no. 77, Soci\'et\'e Math\'ematique de France, 1980, pp. 67--149. 
	
	\bibitem{RodolfoTorresBook} R. Torres, \textit{Boundedness results for operators with singular kernels on distribution spaces}, American Mathematical Society: Memoirs of the American Mathematical Society, American Mathematical Society, 1991.
	
	\bibitem{Tri} H. Triebel, \textit{Theory of function spaces}, Monographs in Math. 78, Birkhauser, 1983. 
	
	\bibitem{VasconcelosPicon} C. Vasconcelos and T. Picon, \textit{On the continuity of strongly singular Calderón-Zygmund-type operators on Hardy spaces.} \textit{preprint} (2021).
	
	\bibitem{Wainger1965} S. Wainger, \textit{Special trigonometric series in k dimensions}, Memoirs of the American Mathematical Society, no. 59, American Mathematical Society, 1965.
	
\end{thebibliography}
	
\end{document}